\newcommand{\hide}[1]{}
\newcommand{\old}[1]{}
\newcommand{\G}{{\cal G}}
\newcommand{\eop}{\hfill{$\blacksquare$}}
\newcommand{\muzero}{\mu_{0}}
\newcommand{\muone}{\mu_{1}}
\newcommand{\mutwo}{\mu_{2}}
\newcommand{\mua}{\mu_{a}}
\newcommand{\ce}{C_e}
\newcommand{\cp}{Q_p}
\newcommand{\cnp}{Q_{np}}
\newcommand{\cA}{{\cal A}}
\newcommand{\cD}{{\cal D}}
\newcommand{\bmu}{\bm{\mu}}
\newcommand{\td}{(\theta, \delta)\mbox{-success}}
\renewcommand{\S}{{\cal S}}
\newcommand{\na}{\mbox{non-adversarial}}
\newcommand{\fone}{\frac{\delta_a - \eta^*_{\widetilde{\theta}}\alpha_R}{\delta_a(1-\mua-\eta^*_{\widetilde{\theta}})}}
\newcommand{\xdoubleF}{\frac{1}{1-\alpha_F}\left(1-\mua-\frac{1}{cw\alpha_R \Delta^a}\right)}
\newcommand{\RAI}{{\mathcal R}_{AI}}
\newtheorem{thm}{Theorem}
\newtheorem{lemma}{Lemma}
\newtheorem{definition}{Definition}
\begin{document}
\title{Single-out fake posts: participation game and its design}
   \author{Khushboo Agarwal$^{1}$ and Veeraruna Kavitha$^{2}$, IEOR, IIT Bombay, India
\thanks{$^{1}$Partially supported by Prime Minister’s Research Fellowship (PMRF), India.
        {\tt\small agarwal.khushboo@iitb.ac.in}}%
\thanks{$^{2}${\tt\small vkavitha@iitb.ac.in}}%
}

\maketitle
\thispagestyle{empty} 
\begin{abstract}
Crowd-sourcing models, which leverage the collective opinions/signals of users on online social networks (OSNs), are well-accepted for fake post detection; however, motivating the users to provide the crowd signals is challenging, even more so in the presence of adversarial users. 

We design a participation (mean-field) game where users of the OSN are lured by a reward-based scheme to provide the binary (real/fake) signals such that the OSN achieves $(\theta, \delta)$-level of actuality identification (AI) - not more than $\delta$ fraction of non-adversarial users incorrectly judge the real post, and at least $\theta$ fraction of  non-adversarial users identify the fake post as fake. An appropriate warning mechanism is proposed to influence the decision-making of the users such that the resultant game has at least one Nash Equilibrium (NE) achieving AI. 
We also identify the conditions under which all NEs achieve AI.  Further, we numerically illustrate that one can always design an AI game if the normalized difference in the innate identification capacities of the users is at least $1\%$, when desired $\theta = 75\%$.

%
\end{abstract}

\section{Introduction}
It is well established that fake news has disastrous implications for society. The severity has increased multi-fold with the increasing usage of OSNs\footnote{$2.93$B people are active monthly on Facebook, out of $5.3$B people using the internet in 2022 (\cite{ITU, FB}).}, e.g., several fake news were either intentionally/unintentionally shared by the users  during the COVID-19 pandemic (\cite{cinelli2020covid}).   

A variety of machine/deep learning based algorithms have been proposed to control fake news propagation (see \cite{feng2022misreporting, sharma2019combating, ruchansky2017csi, ahmed2021detecting} and references therein); their focus is on text/content-based classification. However, it gets difficult to determine the actuality of intentional fake posts only based on content (\cite{sharma2019combating}). Further, most existing algorithms need training, while the datasets required for training the algorithms are limited, more so in specific languages (\cite{ahmed2021detecting}). Hence, algorithms whose applicability is not restricted due to limited datasets and language barriers are required.

Another main approach for fake news detection exploits the crowd signals or responses of the users (\cite{freire2021fake}). 
In general, crowd signals amplify the individuals' understanding and provide promising results (\cite{freire2021fake, kapsikar2020controlling}).  One can achieve much better results, if the crowd is further guided by a structured mechanism (\cite{kapsikar2020controlling}). To the best of our knowledge, only \cite{kapsikar2020controlling} attempts to guide the users in this manner, which strengthens the collective wisdom; we further build upon this idea here. 

Authors in \cite{freire2021fake} mention that the limited users' willingness to publicly give their opinion is a limitation for the application of crowdsourcing models. 
This calls for the design of an appropriate participation game which sufficiently motivates the users to provide their responses.


Recent algorithms also learn the credibility of the users based on the posts shared by them, while utilising their signals (\cite{freire2021fake, tschiatschek2018fake}). However, it is computationally expensive to learn each user's credibility on enormously large platforms like Facebook; hence, improved algorithms with less knowledge are required. Our mechanism requires just the knowledge of the fraction  of the adversarial users (who purposely mis-tag fake posts as real).

We, along with other authors in \cite{kapsikar2020controlling}, conducted an initial study towards singling-out fake posts using crowd-signals. A mechanism is designed where each user tags the post as real/fake based on its intrinsic ability to identify the actuality, the sender's tag and the system-generated warning. The warnings are generated by compiling the tags of the previous recipients. Our mechanism differs from that in \cite{kapsikar2020controlling} as: (i) the previous work assumed that the network has some prior knowledge about the actuality, while the OSN is oblivious in our case, (ii) we do not assume that all users participate in the tagging process, and (iii) no users foul played in \cite{kapsikar2020controlling}, while our analysis is robust against adversarial users.

Thus, motivated by \cite{kapsikar2020controlling}, we consider a mean-field  participation game among the users of the OSN. For each post on the user's timeline, the user is given a choice to tag the post as real/fake. The user can utilize the warning level of that post to make a more informed decision. The main objective is to design an actuality identification (AI) $(\theta, \delta)$ game where at some Nash Equilibrium (called AI-NE), at least $\theta$ fraction of  non-adversarial users tag the fake post as fake, and not more than $\delta$ fraction of non-adversarial users mis-tag the real post as fake. Towards this, the users are rewarded if $\td$ is achieved at NE and earn more if they consider warning level in their judgement.

We propose an easily implementable warning mechanism for the polynomial response function of the users. The designed AI game has at most two NE - one NE always exists and is always AI, and the other NE, if it exists, achieves the desired $\delta$-detection of the real posts. We also identify the conditions required to design an AI game.  


\section{System Description}\label{sec_system_desc}

Consider an online social network (OSN) where   content providers create and share  fake ($F$) or real ($R$) content in the form of posts. More often than not, neither the users of the OSN nor the network is aware that the given post is fake/real. Additionally, there is an \textit{adversary} in the system who designs fake posts and creates fake accounts/employs bots to confuse the other users about the actuality of its post by \textit{declaring fake posts as real}; \textit{they do not mis-tag the real post}. We refer all such fake accounts as adversarial ($a$) users. In presence of such $a$-users, the OSN is interested in designing a mechanism to detect the actuality of any given post. In particular, for any $\theta \gg \delta > 0$, the OSN aims to guide $\na$ users (referred to as just `users' or $na$-users) such that at least $\theta$-fraction of them correctly detect the fake post (denoted as $F$-post) as fake, and at maximum $\delta$-fraction of them consider the real post ($R$-post) as fake. 

Towards this, motivated by the work in \cite{kapsikar2020controlling}, we propose the following new warning mechanism. For each post shared on the OSN, the OSN additionally designs two tabs - the tag tab and the information tab (see Figure \autoref{fig:mechanism}). When a user clicks the former tab, it directly tags the post (as $R$ or $F$) based on its innate capacity to judge the actuality, while on clicking the latter tab, it tags additionally using the warning level. By virtue of the innate capacity, any user judges the $F$-post as fake with probability (w.p.) $\alpha_F > 0$ and mis-judges  the $R$-post as fake w.p. $\alpha_R > 0$; assume $\Delta_R > 1$ for $\Delta_u := \nicefrac{\alpha_F}{\alpha_u}$. Recall that $a$-users mis-tag any $F$-post w.p. $1$.

\textit{Observe that the difference between $\alpha_F$ and $\alpha_R$ might be minimal, due to which   users might not be able to accurately distinguish between the $F$/$R$-posts without any additional information.} The OSN aims to leverage upon this difference and accentuate the capacity of the users by providing them with additional information based on tags of other users (i.e., via collective wisdom) to single out the fake posts; this information is based on the responses of all the users, as the OSN can not differentiate between $a$-users and others. Now, the users can access this additional information by clicking on the information tab, after which a pop-up window appears with a warning level, $\omega$, whose design is discussed later in detail. The warning level influences the decision of the users to tag the post as real/fake. 

\vspace{-3mm}
\begin{figure}[htbp]
    \centering
    \includegraphics[trim = {1cm 1.7cm 1cm 1.7cm}, clip, scale=0.3]{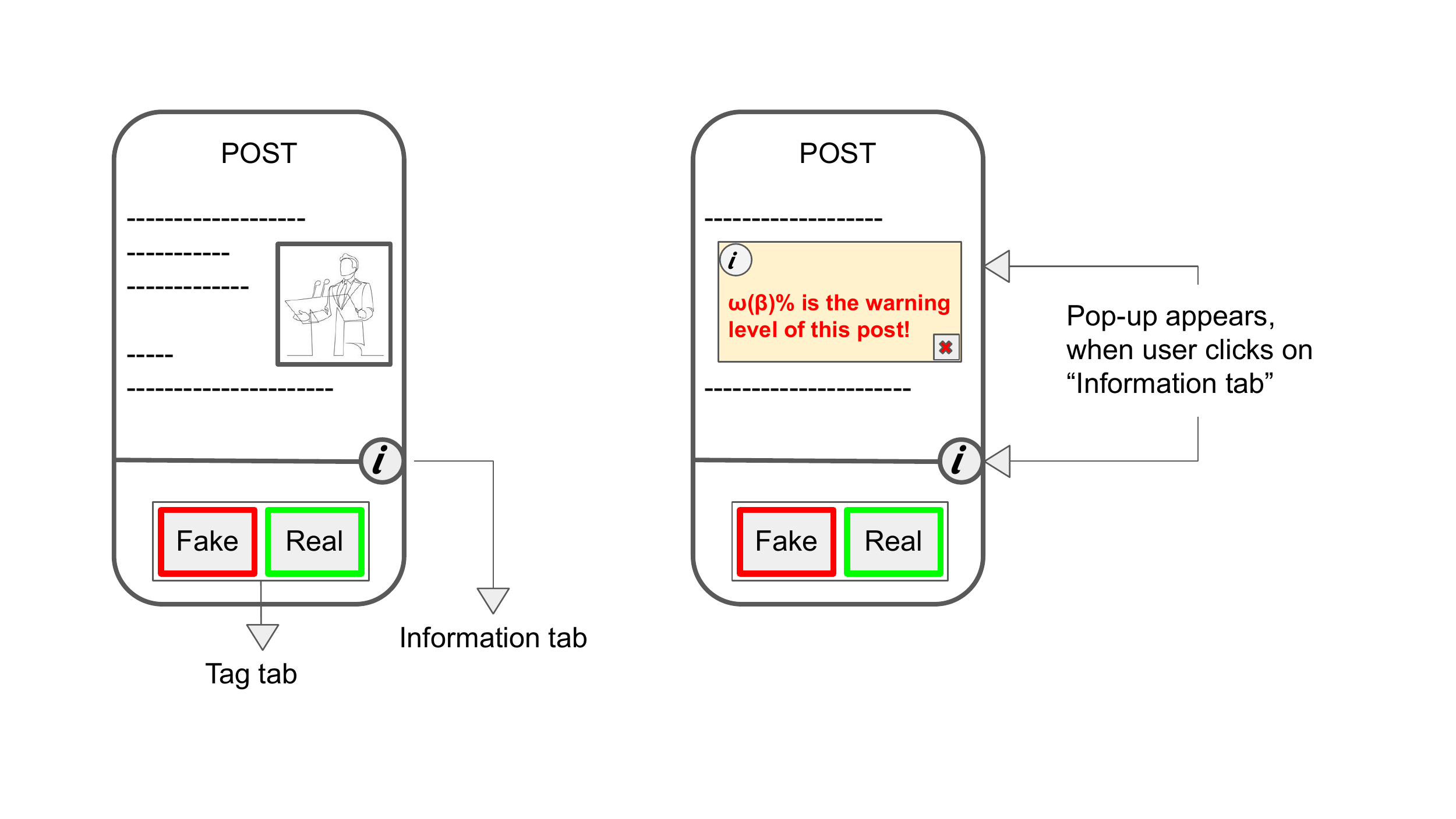}
    \caption{Warning mechanism for each post on OSN}
    \label{fig:mechanism}
    \vspace{-3mm}
\end{figure}
In reality, some users might not participate/tag, some might participate only based on their innate capacities, and the rest participate by considering both the warning provided by the information tab and their innate capacity. We represent the actuality of the underlying post as $u \in \{F, R\}$ and assume that the post is fake w.p. $p$, i.e., $P(u = F) = p$ with $p \in (0,1)$. Thus, the second type of users tag the $u$-post as fake w.p. $\alpha_u$. While, the third type of users tag the post as fake w.p. $r(\alpha_u, \omega(\beta))$ where $r: [0,1] \times \mathbb{R}^+ \mapsto [0, 1] $, the \underline{response function}, is defined as:
\begin{align}\label{eqn_response_func}
r(\alpha, \omega) = \min\{h(\alpha, \omega), 1\}.
\end{align}In the above, $h(\cdot, \cdot)$ is a Lipschitz continuous function and $\omega : [0,1] \mapsto \mathbb{R}^+ $ is the warning level designed by the OSN - the design of the warning depends on the fraction of fake tags so far.
The warning is continuously updated as more users tag the post; we discuss these dynamics in \autoref{sec_participation_game}. We anticipate that users perceive the post to be fake with higher probability, if the warning level is high. Further, the response ($r$) towards fake tagging the post should naturally increase with innate capacity, $\alpha_u$.

\subsection{Objectives of the OSN}\label{sec_objectives}
The OSN aims to design an appropriate warning mechanism. Its primary objective is to achieve \underline{$\td$}, for some $\theta > \alpha_F$ and $\delta > \alpha_R$, defined as follows:
\begin{itemize}
    \item if the underlying post is fake, then at least $\theta$ fraction of non-adversarial users tag it as fake, or
    \item if the underlying post is real, then at most $\delta$ fraction of non-adversarial users tag it as fake.
\end{itemize}Thus, the OSN is willing to compromise a slightly larger fraction ($\delta$) of mis-tags for the real posts, to obtain desired level ($\theta$) of identification of fake posts. However, one may have to ensure sufficient number of users participate in the tagging process. Towards this, the OSN induces a participation game among users, which we discuss next.

\section{Participation game by OSN}\label{sec_participation_game}
Let $n$ be the number of users on the OSN. Let $N_a$ be the $a$-users out of $n$ users. The remaining users on the OSN has three actions/strategies, $s$: (i) not to participate (say $s = 0$), (ii) participate and tag only based on intrinsic ability (say $s = 1$), and (iii) participate and tag on the basis of both intrinsic ability and the warning level (say $s = 2$); let us call the   users as type $0, 1, 2$ in the order of described actions. Let $N_i$ be the number of type $i$   users for $i \in \{0,1, 2\}$. Define $\mu_i := \nicefrac{N_i}{n}$ for $i \in \{0,1,2,a\}$; let $\mua \in [0,1)$ be fixed. \textit{We assume that OSN knows the fraction $\mua$ and nothing more. }

To motivate the   users to participate, the OSN provides publicly visible attributes to each user. For example, the OSN might reflect each user's average participation status on its profile, which gets updated with each post. We believe that such public recognition leads to pro-social behaviour (e.g., \cite{reiffers2019reputation}), i.e., participating in the tagging process. Thus, for the perceived stardom among its peers, each type $1, 2$ user receives a positive utility, say $\cp > 0$. 

The OSN further announces that the participants would get a total reward of $nR$, if the mechanism achieves $(\theta, \delta)$-success; furthermore, each type $2$ user gets $\gamma$ times the reward provided to the type $1$ user (where $\gamma > 1$); observe, $R$ and $ \gamma$ are  the design parameters for the OSN. \textit{We assume that $a$-users directly tag the post like type $1$ users; and as already mentioned, OSN can not differentiate between $a$ and type $1$ users.} Thus, the OSN provides the same reward to $a$-users, as it does to type $1$ users. Hence, upon success, a type $1$ user gets a reward of  $\nicefrac{nR}{(N_1 + N_a + \gamma N_2)} =  \nicefrac{R}{(\mu_1 + \mua +\gamma\mu_2)}$, and a type $2$ user gets $\nicefrac{\gamma R}{(\mu_1+ \mua + \gamma\mu_2)}$ reward. It usually takes long time for the OSN to ascertain the actuality of the posts, and thus, the reward is delivered to the users after the confirmation of the actuality.

The type $0$ users earn  a positive utility for the perceived comfort they experience by not participating and a negative utility for the public disapproval; this amounts to a consolidated utility $\cnp \in \mathbb{R}$ for each such user. We assume $\cp \geq \cnp$; if $\cp < \cnp$, then the OSN needs to provide an additional participation reward ($\geq \cnp - \cp > 0$) to all participants, irrespective of the outcome of the game. 

A type $2$ user also incurs a cost, $\ce > 0$, for the extra time invested in the process.

Thus, given the mechanism announced by the OSN, the   users make choices and participate in the tagging process. The tagging process may take some finite positive time, as the users tag asynchronously. But since the users make a \underline{participation choice} oblivious to the choices of the others, one can model it as a simultaneous move strategic form game, after capturing the tagging process by the fixed-point (FP) equation in \eqref{eqn_fp} described next. Observe here that the tagging process, and hence utility (of any user) depends on the strategy profile ${\bf s}=(s_1, \cdots, s_n)$ chosen by all the users.

Let $X_u$ be the number of fake tags provided by the users, for the $u$-post\footnote{Henceforth, we reserve sub-script $u$ for the actuality of the post, i.e., at places, we may not explicit write that $u \in \{R, F\}$.}, where $u \in \{R, F\}$. Define $\beta_u := \nicefrac{X_u}{(N_1+N_2 + N_a)}$ as the proportion of fake tags (observe, $N_1 + N_2 + N_a$ is the number of participants). The proportion satisfies the following FP equation: 
\begin{align}\label{eqn_fp}
    \beta_F(\bmu) &= \alpha_F \eta + (1-\eta-\eta_a) r(\alpha_F, \omega(\beta_F(\bmu))), \\
    \beta_R(\bmu) & = \alpha_R \eta + (1-\eta-\eta_a) r(\alpha_R, \omega(\beta_R(\bmu))), \mbox{ where} \nonumber \\
    \bmu &= (\mu_0, \mu_1, \mu_2), \ \eta = \eta(\bmu) := \frac{\muone}{\muone+\mutwo+\mua}, \mbox{ and } \nonumber \\
    \eta_a &= \eta_a(\bmu) := \frac{\mua}{\muone+\mutwo+\mua},\nonumber
\end{align}
with the terms explained as below:

$\bullet$ $\eta$ and $\eta_a$ represent the fraction of type $1$ and $a$-users among participants, and empirical measure $\bmu$ is defined by~${\bf s}$;

$\bullet$ Let $Z_{i}^{(j)}$ be the indicator that $i$-th user, among type $j$, has tagged the post as fake. The fraction of type $1$ users that tagged the post as fake equals $\sum_i \nicefrac{Z_i^{(1)}}{N_1} \approx \alpha_u$, when $n$ is sufficiently large  and $\mu_1 > 0$ (w.p.~$1$, by law of large numbers). Thus, the first term $\alpha_u \eta$ results from $\sum_i \nicefrac{Z_i^{(1)}}{(N_1+N_2+N_a)}$, for $u\in\{F, R\}$;

$\bullet$ While the tagging process is ongoing, the type $2$ users are provided refined warning based on the tags of the previous users. Thus, when $n$ is large, we anticipate the warning to stabilise. The stabilised value is reflected by the tags of the future users as well.  Then, $r(\alpha, \omega)$ is the response of the type $2$ users, at stabilised warning level $\omega$ which in turn results from the stabilised fraction of fake tags $\beta_u$. Thus, the second term (similar to the first term) approximately equals $\sum_i \nicefrac{Z_i^{(2)}}{(N_1+N_2+N_a)} \approx (1-\eta-\eta_a) r(\alpha, \omega(\alpha, \beta_u))$.  The overall fraction of fake tags ($\beta_u$) equals the sum of the corresponding terms; hence, the FP equation;


$\bullet$ \textit{When the number of OSN users increases (i.e., as $n \to \infty$), the limit fraction of fake tags (for any given strategy profile of the users) is indeed given by the FP equation \eqref{eqn_fp}}; Lemma \ref{lemma_beta} of next subsection proves this.

The OSN is   interested in the fake tags from $na$-users only, $\nicefrac{X_F}{(N_1+N_2)}$, but, it can not distinguish between the tags from $na$ and $a$-users; it can observe only the overall fraction of fake tags, $\beta_u$. Hence, \textit{$\td$ is redefined as follows} in terms of $\beta_u$ (see \eqref{eqn_fp}):

\vspace{-4mm}
{\small
\begin{align}\label{eqn_td}
\begin{aligned}
    \beta_F(\bmu) &\geq \theta \left(1 - \eta_a(\bmu)\right) := \theta_a(\bmu), \mbox{ and} \\
    \beta_R(\bmu) &\leq \delta \left(1 - \eta_a(\bmu)\right) := \delta_a(\bmu).
\end{aligned}
\end{align}}
Thus, $\td$ depends on $\textbf{s}$ only via $\bmu$, when $n$ is large (also, see footnote \ref{footnote_uni_dev}). Let $P_{\bmu}(\cdot) := P(\cdot|\bmu)$ be the corresponding conditional probability. From \eqref{eqn_td}, the probability of mechanism being $\td$ful, given the empirical ($\textbf{s}$-dependent) distribution $\bmu$ and parameterised by $(\theta, \delta)$, is given by (recall $p$ is the probability of underlying post being fake):

\vspace{-4mm}
{\small
\begin{eqnarray}\label{eqn_prob_success}
P_{\bmu}(S; \theta, \delta) = p  P_{\bmu}(\beta_{F, \overline{k}} \geq \theta_a) +  (1-p)  P_{\bmu}(\beta_{R, \overline{k}} \leq \delta_a), 
\end{eqnarray}}where $\beta_{u, k}$ for any  $k$ represents the proportion of the fake tags for the $u$-post, immediately after the  $k$-th participant tags (see details in sub-section \ref{subsec_ode}), and ${\overline k} = n(1-\mu_0)$ is the index of the last participant to tag. Recall that the game is played continually, however, OSN will design a mechanism  anticipating the eventual responses of the users.
Finally, the utility of $i$-th user is\footnote{\label{footnote_uni_dev}Here, the influence of a single user's action is negligible, as is usually the case in MFGs; observe that if, for example, $\muone, \mutwo$ fractions correspond to $(s_i = 1, s_{-i})$, then the fractions corresponding to $(s_i = 2, s_{-i})$ equal $\muone - \frac{1}{n}, \mutwo + \frac{1}{n}$ which respectively converge to $\muone, \mutwo$ as $n \to \infty$.} ($\textbf{s} = (s_i, \textbf{s}_{-i})$, a standard game theoretic notation):
\begin{align}\label{eqn_util}
U(s_i, \textbf{s}_{-i}) = 
\begin{cases}
\cnp, &\mbox{if } s_i = 0,\\
\cp + \frac{R P_{\bmu}(S; \theta, \delta)}{\muone + \mua + \gamma \mutwo}, &\mbox{if } s_i = 1,\\
\cp - \ce + \frac{\gamma R P_{\bmu}(S; \theta, \delta)}{\muone + \mua + \gamma \mutwo}, &\mbox{if } s_i = 2.
\end{cases}
\end{align}
This completes the description of the participation game represented by $\left<\{1, \dots,  n-N_a\}, \{0,1,2\}, (U_i) \right>$ for any given $N_a$. We derive the solution of this game, Nash Equilibrium\footnote{
\begin{definition}\cite{narahari2014game}
Given a strategic form game $\left<\{1, \dots, n\}, (S_i), (U_i) \right>$, the strategy profile $\textbf{s}^* = (s_i^*)_{i = 1}^n$ is called a \underline{pure
strategy Nash equilibrium} if $U_i(s^*) = \mbox{argmax}_{s\in S_i} U_i(s, \textbf{s}_{-i}^*)$ for each $i \in \{1, \dots, n\}$.
\end{definition}} (NE).
From \eqref{eqn_util}, $U(s_i, \textbf{s}_{-i}) = U(s_i, \bmu)$ for $n$ large enough (see footnote \ref{footnote_uni_dev}). As a result, the utility of any user depends on its strategy and the relative proportion of users, $\bmu$. It is thus appropriate to analyse such large population game using mean field game (MFG) theory. For MFGs with countable action set (as described below), the solution concept is again NE, and is equivalently given by the following (see \cite{carmona2018probabilistic}):
\begin{definition}\label{defn_NE_MFG}
Consider a mean field game, with a countable strategy set, $S$. Let $\mu_s$ represents the fraction of players choosing action $s$, for some $s\in S$. Further, let the utility of any player be $U(s, \bmu)$, for $\bmu = (\mu_s)_{s \in S}$. 
Then, $\bmu^* = (\mu_s^*)_{s \in S}$ is called a \underline{Nash equilibrium of the MFG} if $\S(\bmu^*) \subseteq \mbox{argmax}_s U(s, \bmu^*)$, where $\S(\bmu) := \{s : \mu_s > 0\}$ represents the support. \label{defn_NE}
\end{definition}
We now consider the MFG variant of the participation game by letting $n \to \infty$. The utilities remain as in \eqref{eqn_util}, but the probability of success\footnote{The $\lim_{k \to \infty} P_{\bmu}(\cdot)$ may not exist for all distributions $\bmu$, and hence it is appropriate to define probability of success with $\liminf$ in \eqref{eqn_prob_success}.} changes to (when $\mu_0 < 1-\mua$):
\begin{align}\label{eqn_prob_success_n_large}
\begin{aligned}
P_{\bmu}(S; \theta, \delta) &= p \liminf_{k \to \infty} P_{\bmu}(\beta_{F, k} \geq \theta_a) \\
&\hspace{0.6cm}+  (1-p) \liminf_{k \to \infty} P_{\bmu}(\beta_{R, k} \leq \delta_a),
\end{aligned}
\end{align}
and $P_{(1-\mua, 0, 0)}(S; \theta, \delta) := 0$.
Let $\bmu^* = (\muzero^*, \muone^*, \mutwo^*)$ be the NE of MFG (when its exists), where $\mu_i^* \geq 0$ for $i \in \{0,1,2\}$ and $\sum_{i=0}^2 \mu_i^* = 1-\mua$. The OSN aims to appropriately design ($R, \gamma, \omega$) such that the equilibrium outcome of the resultant game achieves $\td$; \textit{we represent the game compactly by $\G(R, \gamma, \omega)$}. We call the MFG as Actuality identification (AI) game as per the definition below:
\begin{definition}\label{defn_AI}
Call a game $\G(R, \gamma, \omega)$ to be an \underline{AI game} if there exists parameters $R > 0, \gamma \geq 1$ and a warning mechanism $\omega$ such that for some NE $\bmu^*$ of the participation game  the following is true:

\vspace{-4mm}
{\small
\begin{align*}
\liminf_{k \to \infty} P_{\bmu^*}(\beta_{F, k} \geq \theta_a) &= 1 \mbox{ and }
\liminf_{k \to \infty} P_{\bmu^*}(\beta_{R, k} \leq \delta_a) = 1.
\end{align*}}
Such an NE is called an \underline{AI-NE}.
\end{definition}
In simpler words, a participation game is called an AI game if at some NE, $\td$ is achieved. 
Observe that the definition \ref{defn_AI} requires that $P_{\bmu^*}(S;\theta, \delta) = 1$ (see \eqref{eqn_prob_success_n_large}), which implicitly demands that the random tagging-dynamics driven by the warning mechanism leads to $\td$ w.p. $1$. In other words, the two limit infimums in \eqref{eqn_prob_success} equal $1$.

We derive the sufficient conditions for designing an AI game, after providing the analysis of the MFG in the next section. 
For the sake of clarity, we re-state that design parameters of the system are $R, \gamma$, along with the warning mechanism $\omega$, the parameters $\alpha_R, \alpha_F, \cp, \cnp, \ce$ are user specific, while $p$ is a post specific parameter. We henceforth assume the following: 

\noindent \textbf{(P)} Assume $R > 0, \gamma > 1$, $\alpha_F \in (\alpha_R, 1)$, $\cnp \leq \cp$, $\ce > 0$, $p \in (0,1)$, $\mua \in [0,1)$. 


\section{MFG: Analysis and Design}\label{sec_MFG}
We first derive the limit of $\beta_{u, k}(\bmu)$ that defines \eqref{eqn_prob_success_n_large} for any $\bmu$. Then, we appropriately choose $R, \gamma$ and $\omega$ such that the resultant is an AI game.

\subsection{Tagging dynamics}\label{subsec_ode}

For any given $\bmu$, recall that users asynchronously visit the OSN and provide the tag; some users also utilise the warning level.\footnote{We skip explicit mention of the dependence of various entities (e.g., $\eta, \eta_a$) on  $\bmu$ at few places for simplifying notations and improving on clarity.} This leads to continuous-time evolution of the proportion of fake tags, ($\beta_{u, k}$), and the corresponding warning levels, $(\omega(\beta_{u, k}))$. However, it is sufficient to observe the tagging process whenever a user decides to tag, i.e., the embedded process; let $k \in \mathbb{Z}^+$ be the index of such decision epochs. The time duration between two decision epochs must follow some distribution, however it's specific details are immaterial for the study. At any decision epoch, the participant can be an $a$-user, type $1$ or $2$ user w.p. $\eta_a$, $\eta$ or $1-\eta-\eta_a$ respectively.

Let $X_{u,k}$ be the number of fake tags for $u$-post at $k$-th epoch, where $u \in \{F, R\}$ is fixed. The fraction of fake tags at $(k+1)$-th epoch, $\beta_{u, k+1}(\bmu)$, can then be written as:
\begin{align}\label{eqn_dynamics_beta}
\begin{aligned}
    \beta_{u, k+1} &:= \frac{X_{u,k+1}}{k+1} = \frac{ X_{u,k} + 1_{\{\mbox{tag for $u$-post} = F\}}}{k+1} \\
    &= \beta_{u,k} + \frac{1}{k+1} L_{u,k}, \mbox{ where }  \\
    L_{u,k} &:= 1_{\{\mbox{tag for $u$-post} = F\}} - \beta_{u,k}.
\end{aligned}
\end{align}
This iterative process can be analysed using stochastic approximation tools (see \cite{kushner2003stochastic}). Ordinary Differential Equation (ODE) based analysis is a common approach to study such processes - define the conditional expectation of $L_{k, u}$ with respect to sigma algebra, ${\mathcal F}_{k, u} := \sigma\{X_{u,j}: j < k\}$:
\begin{align}\label{eqn_cond_exp}
\begin{aligned}
    E[L_{u,k}|{\mathcal F}_k] &= g_u(\beta_{u, k}), \mbox{ where}\\
g_u(\beta) := 
     \alpha_u \eta + (1-&\eta-\eta_a) r(\alpha_u, \omega(\beta)) - \beta. 
\end{aligned}
\end{align}
Then, the dynamics in \eqref{eqn_dynamics_beta} can be captured via the following autonomous ODE (proved in Lemma \ref{lemma_beta} given below):
\begin{align}\label{eqn_ode}
    \dot{\beta_u} = g_u(\beta_u).
\end{align}
The right hand side of the above ODE is Lipschitz continuous, and thus has unique global solution (see \cite[Theorem 1, sub-section 1.4]{piccini1984ordinary}).
Define the domain of attraction (DoA)

\vspace{-4mm}
$$\cD_u := \{\beta \in [0,1]: \beta_u(t) \stackrel{t \to \infty}{\longrightarrow} \cA_u, \mbox{ if } \beta_u(0) = \beta\},$$for asymptotically stable (AS) set $\cA_u$ of the ODE \eqref{eqn_ode} in the interval $[0, 1]$ (see \cite{piccini1984ordinary}).
Assume the following: 

\noindent \textbf{(A)} $P(\beta_{u, k} \in \cD_u \mbox{ infinitely often}) = 1.$

\begin{lemma}\label{lemma_beta}
Under assumption \textbf{(A)}, the sequence $(\beta_{u,k})$ converges to $\cA_u$ w.p. $1$, as $k \to \infty$.  \eop
\end{lemma}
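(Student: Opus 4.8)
The plan is to invoke the standard stochastic-approximation convergence theorem (e.g. the Kushner--Yin framework in \cite{kushner2003stochastic}) applied to the recursion \eqref{eqn_dynamics_beta}, and simply verify that its hypotheses hold in our setting. First I would rewrite \eqref{eqn_dynamics_beta} as the canonical SA iterate $\beta_{u,k+1} = \beta_{u,k} + a_k\big(g_u(\beta_{u,k}) + M_{u,k+1}\big)$ with step-size $a_k = 1/(k+1)$ and martingale-difference noise $M_{u,k+1} := L_{u,k} - E[L_{u,k}\mid {\mathcal F}_k]$; by \eqref{eqn_cond_exp} this is exactly the decomposition of the increment $L_{u,k}/(k+1)$ into its conditional mean $g_u(\beta_{u,k})/(k+1)$ and a zero-mean part. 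Then I would check the four standard conditions: (i) $\sum_k a_k = \infty$ and $\sum_k a_k^2 < \infty$, which is immediate for $a_k = 1/(k+1)$; (ii) $g_u$ is Lipschitz continuous on $[0,1]$, which follows from \eqref{eqn_cond_exp} because $r(\alpha_u,\cdot)=\min\{h(\alpha_u,\cdot),1\}$ with $h$ Lipschitz, $\omega$ is (taken) Lipschitz, and the affine terms are trivially Lipschitz; (iii) the iterates stay in the bounded set $[0,1]$ almost surely --- this is built into the dynamics since $\beta_{u,k}=X_{u,k}/k$ is a ratio of a count to the epoch index, and \eqref{eqn_dynamics_beta} shows the convex-combination update keeps $\beta_{u,k}\in[0,1]$; and (iv) the noise $M_{u,k+1}$ is a bounded martingale difference sequence (indeed $|M_{u,k+1}|\le 1$ since $L_{u,k}\in[-1,1]$), so $\sum_k a_k M_{u,k+1}$ converges a.s.\ by the martingale convergence theorem, giving the required noise-averaging condition.

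With these in place, the SA theorem yields that the limit set of $(\beta_{u,k})$ is almost surely a connected, internally chain-transitive invariant set of the ODE \eqref{eqn_ode}. The remaining step is to upgrade this to convergence to the asymptotically stable set $\cA_u$, and this is exactly where assumption \textbf{(A)} enters: since $(\beta_{u,k})$ visits the domain of attraction $\cD_u$ infinitely often, and $\cA_u$ is asymptotically stable for an ODE on the compact interval $[0,1]$, the standard "lock-in" argument (e.g.\ the Kushner--Clark lemma, or Borkar's Chapter~2 lock-in lemma) applies: once the iterates enter a small enough neighbourhood of $\cA_u$ inside $\cD_u$ at a late enough epoch, the combined effect of the vanishing step sizes and the a.s.-convergent noise sum prevents them from ever escaping, and the ODE flow then drags them into $\cA_u$. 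Hence $\beta_{u,k}\to\cA_u$ w.p.\ $1$.

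The main obstacle, and the point that deserves the most care, is the lock-in step rather than the routine verification of the SA hypotheses. One has to argue that infinitely-often recurrence to $\cD_u$ actually forces eventual absorption, which requires that the tail noise $\sup_{m\ge k}\big|\sum_{j=k}^{m} a_j M_{u,j+1}\big|\to 0$ a.s.\ (true here by (iv)) and a uniform-in-time estimate on how far the ODE trajectory from a point in $\cD_u$ can stray before converging --- this is where asymptotic stability (as opposed to mere stability) of $\cA_u$ is essential. A secondary subtlety is that $g_u$ depends on $\bmu$ through $\eta,\eta_a$, but since $\bmu$ is held fixed throughout the lemma this is not a genuine difficulty; I would simply note the dependence is suppressed in the notation. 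I would also remark that the embedded (decision-epoch) process is the right object: the inter-epoch holding times are irrelevant because the fraction $\beta_{u,k}$ only changes at tagging epochs, so the continuous-time warning dynamics and the discrete recursion \eqref{eqn_dynamics_beta} have the same limit set.
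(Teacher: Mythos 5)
Your proposal is correct and takes essentially the same route as the paper: both reduce the recursion to a standard stochastic-approximation iterate with martingale-difference noise and step sizes $1/(k+1)$, verify the hypotheses of the Kushner--Yin convergence theorem (square-summable steps, Lipschitz $g_u$, bounded noise/second moments, no bias), and then use assumption \textbf{(A)} to supply the infinitely-often recurrence to $\cD_u$ that the theorem needs to conclude convergence to the asymptotically stable set $\cA_u$. The paper's proof is just a terser version of yours, delegating the lock-in argument entirely to the cited theorem rather than spelling it out.
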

The proof of above Lemma and all further results are in Appendix (section \ref{appendix}).
It is clear that the attractor set ($\cA_u$) depends on the choice of warning mechanism $\omega(\cdot)$, $\bmu$ and the response function $r(\cdot, \cdot)$. \textit{Henceforth, we assume $\cA_u$ to be a singleton (for each $\bmu$) and provide the analysis; we prove this assumption and \textbf{(A)} for a special response function $r$ and the correspondingly chosen $\omega$ in section \ref{sec_response1}}. One needs to characterise $\cA_u$ and prove \textbf{(A)} for other response functions considered in future. Thus, by the virtue of Lemma \ref{lemma_beta}, $\beta_u \in \cA_u$ uniquely signifies the eventual fraction of fake tags for the $u$-post.

Hence, if $\cA_u = \{\beta_u^*\}$ for both $u$, then \eqref{eqn_prob_success_n_large} simplifies to:
\begin{align}\label{eqn_simplified_prob_success}
P_{\bmu}(S; \theta, \delta) = p 1_{\{\beta_{F}^* \geq \theta_a(\bmu)\}} + (1-p) 1_{\{\beta_R^* \leq \delta_a(\bmu)\}}.
\end{align}

\subsection{Design of AI game}\label{sec_analysis}
Any $\bmu$ that satisfies definition \ref{defn_NE_MFG} qualifies to be a NE of the MFG. However, the OSN is interested in designing a game (i.e., choosing $R, \gamma, \omega$) for which at least one AI-NE exists (see definition \ref{defn_AI}). We achieve the same in Theorem \ref{thrm_AI}. We will further derive the conditions under which the game has only AI-NE for a specific response function in section \ref{sec_response1}.

Before we state the result, define \underline{$\bmu_x := (0,x, 1-x-\mua)$}  and \underline{$\beta_u^x := \beta_u^*(\bmu_x)$}, i.e., the attractor obtained via Theorem \ref{thrm_AI} for $\bmu = \bmu_x$, for any $x \in [0, 1-\mua]$, for $u\in\{R,F\}$.

Consider a response function $r(\alpha, \omega)$. Suppose there exists a warning mechanism $\omega$   that satisfies the following conditions related to the ODE \eqref{eqn_ode}  for some $\eta \in (0,1-\mua)$:

\noindent (B.i) $\exists$ a  $\beta_R^\eta  \in [0,\delta_a(\bmu_\eta)]$ such that $g_R(\beta_R^\eta) = 0$, and 

\noindent (B.ii) $\exists$ a $\beta_F^\eta \in [\theta_a(\bmu_\eta),1]$ such that $g_F(\beta_F^\eta) = 0$,

\noindent (B.iii) there are no other equilibrium points of the ODE \eqref{eqn_ode} in $[0,1]$ for each $u$, with respect to $\bmu_\eta$, and

\noindent (B.iv) $\frac{\partial g_u(\beta_u^\eta)}{\partial \beta_u^\eta} < 0$ for each $u$. 

For such a pair of $(r, \omega)$, one can anticipate that the tagging dynamics converge to a unique limit point for each $u$, leading to an AI game. This indeed is true as claimed below (proof in  Appendix (section \ref{appendix})).

\begin{thm}\label{thrm_AI}
Consider a pair of response and warning mechanism that satisfy assumptions (B.i)-(B.iv). Then, $\beta_u^\eta$ is AS with DoA as $[0,1]$ for each $u$. Further, choose $R, \gamma$:
\begin{align}\label{eqn_R_gamma}
\begin{aligned}
\gamma &> \underline{\gamma}(\eta) := \frac{1}{1-p}\left(\frac{1-(\eta+\mua)(1-p)}{1-\eta-\mua} \right) \mbox{ and }\\
R &= \ce \left(1 -\eta - \mua + \frac{1}{\gamma-1}  \right).
\end{aligned}
\end{align}Then, $\G(R, \gamma, \omega)$ is an AI game with the following properties:

\noindent (a) $\bmu_\eta$ is an AI-NE,

\noindent (b) any $\bmu_x$, with $x \in [0, \eta)\cup\{1-\mua\}$ is not an NE, and 

\noindent (c) any $\bmu$ with $\mu_0 > 0$ can not be a NE.  \eop
\end{thm}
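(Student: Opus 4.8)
The plan is to prove the three parts in the natural order, starting from the ODE-stability claim and then checking the Nash-equilibrium conditions using the utility formula \eqref{eqn_util} together with the simplified success probability \eqref{eqn_simplified_prob_success}.

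\emph{Stability claim and part (a).} First I would argue that $\beta_u^\eta$ is asymptotically stable with domain of attraction $[0,1]$. By (B.i)--(B.ii) the point $\beta_u^\eta$ is an equilibrium of \eqref{eqn_ode} with respect to $\bmu_\eta$; by (B.iv) it is locally asymptotically stable since $g_u$ is scalar and $\partial g_u/\partial\beta_u<0$ there; and by (B.iii) there are no other equilibria in $[0,1]$. For a scalar autonomous ODE on a compact interval, a unique equilibrium that is locally stable must attract all of $[0,1]$: $g_u$ cannot change sign except at $\beta_u^\eta$, so $g_u>0$ to its left and $g_u<0$ to its right (the signs forced by (B.iv)), hence every trajectory is monotone toward $\beta_u^\eta$. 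Combined with assumption \textbf{(A)} and Lemma \ref{lemma_beta}, this gives $\cA_u=\{\beta_u^\eta\}$ and hence \eqref{eqn_simplified_prob_success} applies at $\bmu_\eta$. Since $\beta_F^\eta\ge\theta_a(\bmu_\eta)$ and $\beta_R^\eta\le\delta_a(\bmu_\eta)$, we get $P_{\bmu_\eta}(S;\theta,\delta)=1$, so $\bmu_\eta$ satisfies the AI-NE conditions of Definition \ref{defn_AI} \emph{provided} it is an NE. To show it is an NE: at $\bmu_\eta=(0,\eta,1-\eta-\mua)$ the support is $\{1,2\}$, so by Definition \ref{defn_NE_MFG} I must check $U(1,\bmu_\eta)=U(2,\bmu_\eta)\ge U(0,\bmu_\eta)$. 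The indifference $U(1,\bmu_\eta)=U(2,\bmu_\eta)$ reads $\frac{RP_{\bmu_\eta}(S)}{\eta+\mua+\gamma(1-\eta-\mua)}=-\ce+\frac{\gamma RP_{\bmu_\eta}(S)}{\eta+\mua+\gamma(1-\eta-\mua)}$; substituting $P_{\bmu_\eta}(S)=1$ and solving for $R$ yields exactly $R=\ce\big(1-\eta-\mua+\frac{1}{\gamma-1}\big)$, which is the chosen value — so this is an identity by construction. Then $U(1,\bmu_\eta)\ge U(0,\bmu_\eta)=\cnp$ follows from $\cp\ge\cnp$ and $R,P_{\bmu_\eta}(S)>0$.

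\emph{Part (b).} For $x\in[0,\eta)$, consider $\bmu_x=(0,x,1-x-\mua)$; its support is again $\{1,2\}$ (or $\{2\}$ if $x=0$), so an NE would require $U(2,\bmu_x)\ge U(1,\bmu_x)$, i.e. $(\gamma-1)\frac{RP_{\bmu_x}(S)}{x+\mua+\gamma(1-x-\mua)}\ge\ce$. I would show this fails because the left side is strictly decreasing in the reward-dilution denominator's sensitivity — more precisely, I expect that for $x<\eta$ the dynamics no longer achieve $\td$ (the fraction of type-2 users $1-x-\mua$ is larger, but the key is that the equality $R=\ce(\dots)$ was tuned at $x=\eta$), so either $P_{\bmu_x}(S)<1$ making the inequality fail outright, or, if $P_{\bmu_x}(S)=1$, one compares $(\gamma-1)R/(x+\mua+\gamma(1-x-\mua))$ against $\ce$ and uses monotonicity in $x$ of the denominator $x+\mua+\gamma(1-x-\mua)=\gamma(1-\mua)-(\gamma-1)x$, which is decreasing in $x$, so the ratio is increasing in $x$; at $x=\eta$ it equals $\ce$, hence for $x<\eta$ it is $<\ce$, contradicting the NE condition for type 2. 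The case $x=1-\mua$ (all type 1, no type 2) is handled separately: there $P_{(1-\mua,\dots)}$ uses $\beta_u$ from the all-type-1 dynamics, and I must check that a type-1 user would want to deviate to type 2; this requires showing $U(2,\cdot)>U(1,\cdot)$ at that profile, which again reduces to a ratio-versus-$\ce$ comparison at $x=1-\mua$, where the denominator is smallest so the deviation is profitable (using $\gamma>1$). For $x=0$ (all type 2), the support is $\{2\}$ and one checks $U(2,\bmu_0)\ge U(1,\bmu_0)$ and $\ge\cnp$; I'd verify the first fails, making $\bmu_0$ non-NE — this is consistent with the ``$x\in[0,\eta)$'' statement since $0<\eta$.

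\emph{Part (c).} Suppose $\bmu$ has $\mu_0>0$, so $0\in\S(\bmu)$ and an NE would need $U(0,\bmu)\ge U(1,\bmu)$ and $U(0,\bmu)\ge U(2,\bmu)$, i.e. $\cnp\ge\cp+\frac{RP_{\bmu}(S)}{\muone+\mua+\gamma\mutwo}$. Since $\cp\ge\cnp$ and $R>0$, the right side is at least $\cp\ge\cnp$, with equality only if $P_{\bmu}(S)=0$. So the only way $\mu_0>0$ survives is $P_{\bmu}(S)=0$; but then I claim it still cannot be an NE: if $\mu_2>0$ as well, $U(2,\bmu)=\cp-\ce<\cp=U(1,\bmu)$ — wait, one needs $\cnp\ge\cp-\ce$ too, which can hold, so I must instead note that when $P_{\bmu}(S)=0$ the candidate NE gives zero reward, and I'd show the OSN's chosen $(R,\gamma)$ together with $\cp\ge\cnp$ forces a profitable deviation; concretely, the cleanest route is: $\mu_0>0$ and the indifference/optimality at the other supported types pin down the fractions, but the resulting profile would have to coincide with some $\bmu_x$, $x<1-\mua$, having a positive type-0 mass, and I show the fixed-point/ODE analysis plus \eqref{eqn_R_gamma} make $P=1$ there (not $0$), contradicting $P_\bmu(S)=0$; hence no NE with $\mu_0>0$.

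\emph{Main obstacle.} The delicate part is part (c) and the $x=1-\mua$ sub-case of (b): ruling out \emph{all} mixed profiles with $\mu_0>0$ requires knowing how $P_{\bmu}(S)$ and the attractors $\beta_u^*(\bmu)$ behave as $\bmu$ varies away from the designed ray $\bmu_x$, which the theorem's hypotheses only guarantee along $\bmu_\eta$. I expect the actual argument leans on monotonicity of the success region in the participation fraction $1-\mu_0$ (more participants and more type-2 users can only help achieve $\td$, by the structure of \eqref{eqn_fp}) so that $P_{\bmu}(S)\in\{0,1\}$ with the value determined by whether enough users participate, and then the reward-dilution ratio comparison against $\ce$ and $\cnp-\cp$ closes each case. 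Nailing down that monotonicity/comparison cleanly, rather than the algebra of the indifference conditions (which is mechanical once $P=1$ is substituted), is where the real work lies.
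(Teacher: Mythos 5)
Your stability argument and part (a) match the paper's proof essentially verbatim (sign analysis of $g_u$ from (B.i)--(B.iv), then the indifference $U(1,\bmu_\eta)=U(2,\bmu_\eta)>U(0,\bmu_\eta)$ with $P_{\bmu_\eta}(S)=1$), and your treatment of $\bmu_x$ for $x\in[0,\eta)$ via monotonicity of the reward-dilution ratio in $x$ is also the paper's argument. However, there are two genuine gaps. First, the $x=1-\mua$ sub-case of (b): you argue the deviation to type $2$ is profitable because the denominator is smallest there, ``using $\gamma>1$.'' This ignores that the numerator also drops: at $\bmu_{1-\mua}$ the tagging is pure type-$1$/adversarial, so by the law of large numbers $\beta_u\to\alpha_u(1-\mua)$, hence $\beta_F<\theta_a$ but $\beta_R<\delta_a$, giving $P_{\bmu_{1-\mua}}(S)=1-p$, not $1$. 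The NE condition then reads $(\gamma-1)R(1-p)\leq\ce$, and ruling it out requires precisely $(1-p)\bigl(\gamma-(\gamma-1)(\eta+\mua)\bigr)>1$, i.e.\ $\gamma>\underline{\gamma}(\eta)$. This is the \emph{only} place the hypothesis $\gamma>\underline{\gamma}(\eta)$ is used; with merely $\gamma>1$ and $p$ close to $1$ the inequality can fail, so your argument as stated does not close this case.

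Second, part (c): you correctly reduce to $P_{\bmu}(S)=0$, $\cnp=\cp$, and $2\notin\S(\bmu)$, but then you speculate about monotonicity of the success region and claim the ODE analysis ``makes $P=1$ there,'' which is both unproven and false in general. The paper's closing step is elementary and does not need any monotonicity: for any remaining candidate $\bmu=(x,1-x-\mua,0)$ with $x\in(0,1-\mua)$, a direct computation from \eqref{eqn_fp} gives $\beta_R(\bmu)=\alpha_R\frac{1-x-\mua}{1-x}<\delta\frac{1-x-\mua}{1-x}=\delta_a(\bmu)$ (since $\delta>\alpha_R$), so $P_{\bmu}(S)\geq 1-p>0$, contradicting $P_{\bmu}(S)=0$; the degenerate profiles $\bmu_{1-\mua}$ and $(1-\mua,0,0)$ are handled by the same kind of pointwise check. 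You identified the right reduction but not this decisive computation, which you yourself flag as the unresolved ``main obstacle,'' so the proposal is incomplete rather than wrong in approach.
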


Thus, if the OSN chooses $(R, \gamma, \omega)$ as per Theorem \ref{thrm_AI} then the resultant is an AI game.  Observe that at AI-NE ($\bmu_\eta$), there is a non-zero proportion of both type $1$ and $2$ users. In fact, the OSN is able to motivate all the $na$-users to participate in the tagging process at $\bmu_\eta$. 

It can be seen from \eqref{eqn_R_gamma} that the OSN can monetarily benefit (reduced $R$) by either choosing a larger $\gamma$ (a bigger disparity between rewards provided to type $1$, $2$ users) or larger $\eta$ (that leads to larger fraction of type $1$ users). Interestingly, $R$ can be reduced to an arbitrarily small value (observe infimum of the achievable $R$ equals $0$). Further even if the perceived cost of processing warning ($\ce$) is high, one can design the desired AI game by appropriately scaling $\gamma, \eta$ with the same reward, $R$. 

Any NE requires a mixed behavior; if all the users consider the warning ($\mu_2 = 1-\mua$) or tag only based on their intrinsic abilities ($\mu_1 = 1-\mua$), then, there is no NE. 

Ideally, the OSN would want to design a game where any NE, $\bmu_x^*$, is an AI. Theorem \ref{thrm_AI} provides such guarantees only for $x \in [0, \eta]\cup\{1-\mua\}$ (any such $x \neq \eta$ is not a NE). We next delve upon the remaining configurations for a specific response function. 





\vspace{-4mm}
\section{A specific response function}\label{sec_response1}
In this section, we specifically consider a class of polynomial response functions, $r(\alpha, \omega) = \min\{h(\alpha, \omega), 1\}$ with $h(\cdot, \cdot)$ defined as (extension of the linear response in \cite{kapsikar2020controlling}):
\begin{align}\label{eqn_response1}
    h(\alpha, \omega) = c \alpha^a \omega^b, \mbox{ where } a, b, c \in \mathbb{R}^+.
\end{align}Recall that any type $1$ user fake tags a $u$-post w.p. $\alpha_u$. However, if a user incorporates warning level as well, then it responds differently - it fake tags the $u$-post w.p. $r(\alpha_u, \omega)$. We model the said effect in \eqref{eqn_response1} via $a, b$ which indicates the positive correlation of user's response to the innate capacity and the warning level respectively. Next, we introduce few notations:

\vspace{-4mm}
{\small\begin{align}\label{eqn_notations}
    \eta^*_l &:=  \frac{(1-l)(1-\mua)}{1-\alpha_F} \mbox{ for any } l \in \mathbb{R}, \nonumber\\
    K_\delta &= \kappa^2-4\delta\alpha_R\alpha_F(\Delta_R)^a \geq 0 \mbox{ for }  \nonumber\\
    \kappa &:= \delta\left((\Delta_R)^a(1-\alpha_F) - 1 \right) - \alpha_R (\Delta_R)^a, \\
    \delta_a &:= \delta(\bmu_x) = \delta(1-\mua) \mbox{ for any } x \in (0, 1-\mua), \mbox{ and}  \nonumber\\
    \overline{\eta} &:= \frac{\delta_a((1-\mua)cw\alpha_R - 1)}{cw\alpha_R \delta_a - \alpha_R}.  \nonumber
\end{align}}

We choose the following warning mechanism that would modulate users' response given in \eqref{eqn_response1} for $\td$ful identification of the posts (see Theorem \ref{thrm_response1}):
\begin{align}\label{eqn_warning}
    \omega( \beta) = w^{1/b} \alpha_R^{(1-a)/b} \beta^{1/b},
\end{align}where $w$ will be appropriately chosen as per Algorithm \ref{alg_AI}. Note that while designing the warning mechanism ($\omega$), we assume that the OSN knows $\alpha_R$. 

\vspace{-0.4cm}
\RestyleAlgo{ruled}
\begin{algorithm}
\caption{Design of AI game, assume $K_\delta \geq 0$}\label{alg_AI}
\eIf{
$
\theta > f(\theta, \delta) := \frac{\delta_a-\eta^*_\theta \delta}{(\Delta_R)^a(\delta_a-\alpha_R \eta^*_\theta)}
$
}{
$\widetilde{\theta} \gets \theta$
}{
\If{$K_\delta \geq 0$}{
$\widetilde{\theta} \gets \min\left\{\max\left\{\frac{-\kappa + \sqrt{K_\delta}}{2(\Delta_R)^a\alpha_R}, 1 - \frac{\delta(1-\alpha_F)}{\alpha_R} \right\} + \epsilon, 1\right\}$, \mbox{ for } $\epsilon > \max\left\{0, \theta - \frac{-\kappa + \sqrt{K_\delta}}{2(\Delta_R)^a\alpha_R}\right\}$
}}

\textbf{Choose:}

(i) $w \gets \frac{1}{c \alpha_R}
\left( 
\frac{1}{1-\mua} \max\left\{1, \frac{1}{(\Delta_R)^a \widetilde{\theta}}\right\} + \epsilon_1\right)$, where

\vspace{-2mm}
{\footnotesize
$$
\hspace{-1mm} 0 < \epsilon_1 < \min\left\{ \frac{1}{\delta_a}, \fone \right\} - \frac{1}{1-\mua}\max\left\{1, \frac{1}{(\Delta_R)^a\widetilde{\theta}} \right\},
$$}


(ii) $\eta \gets \overline{\eta} + \epsilon_2$, where $\epsilon_2 \in (0, \eta^*_{\widetilde{\theta}}-\overline{\eta}]$.
\end{algorithm}
\vspace{-0.8cm}
\begin{thm}\label{thrm_response1}
Consider the response function as in \eqref{eqn_response1}. Let $\theta \in \left(\max\left\{\alpha_F, \frac{\delta}{(\Delta_R)^a}\right\}, 1\right]$ and $\delta \in (\alpha_R, \theta)$. If the parameters satisfy the conditions in Algorithm \ref{alg_AI},
then, choosing $R, \gamma$ as in \eqref{eqn_R_gamma} and the warning mechanism as in \eqref{eqn_warning} for $w$ given in Algorithm \ref{alg_AI} leads to a game $\G(R, \gamma, \omega)$ such that:

\noindent (i) $\bmu_\eta$ is an AI-NE, for $\eta$ in Algorithm \ref{alg_AI}, and

\noindent (ii) $\bmu_{x_\eta}$ is the only other NE, with $x_\eta := \frac{p}{\gamma-1} + p(1-\mua-\eta) + \eta$, if $x_\eta > \eta_{\widetilde \theta}^*$.  
\eop
\end{thm}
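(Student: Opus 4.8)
The plan is to deduce part~(i) by instantiating Theorem~\ref{thrm_AI} for the polynomial response \eqref{eqn_response1} together with the warning \eqref{eqn_warning} and the parameter choices of Algorithm~\ref{alg_AI}, and to deduce part~(ii) by a direct best-response computation on the few remaining NE candidates, exploiting the explicit piecewise-linear form of the tagging ODE. So I would first write the ODE out: substituting \eqref{eqn_warning} into \eqref{eqn_response1} gives $r(\alpha_u,\omega(\beta))=\min\{m_u\beta,1\}$ with $m_R=cw\alpha_R$ and $m_F=cw\alpha_R(\Delta_R)^a$ (using $\alpha_F^a\alpha_R^{1-a}=\alpha_R(\Delta_R)^a$). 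Hence for $\bmu_x=(0,x,1-x-\mua)$, where $\eta(\bmu_x)=x$ and $\eta_a(\bmu_x)=\mua$, the drift $g_u(\beta)=\alpha_ux+(1-x-\mua)\min\{m_u\beta,1\}-\beta$ is continuous and piecewise linear: affine with slope $(1-x-\mua)m_u-1$ on $[0,1/m_u]$ and slope $-1$ afterwards, with $g_u(0)=\alpha_ux>0$ and $g_u(1)=\alpha_ux-x-\mua<0$. Thus $g_u$ has a unique zero $\beta_u^x\in[0,1]$ as soon as the first-piece slope is negative (or the second piece is reached before the zero), and this root is available in closed form, either as $\beta_u^x=\alpha_ux/(1-(1-x-\mua)m_u)$ in the linear regime or as $\beta_u^x=1-\mua-x(1-\alpha_u)$ in the saturated regime.

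For part~(i), with $w,\eta$ as in Algorithm~\ref{alg_AI} I would verify (B.iii) from the uniqueness-of-zero argument above; (B.iv) because $\beta_u^\eta$ always sits on a strictly decreasing piece of $g_u$; (B.i) by checking that the closed form for $\beta_R^\eta$ satisfies $\beta_R^\eta\le\delta_a(\bmu_\eta)=\delta(1-\mua)$, which is exactly what the upper bounds on $\epsilon_1$ (namely $\epsilon_1<\min\{1/\delta_a,\ \fone\}-\tfrac1{1-\mua}\max\{1,1/((\Delta_R)^a\widetilde{\theta})\}$) and on $\epsilon_2$ encode; and (B.ii) by checking $\beta_F^\eta\ge\theta_a(\bmu_\eta)=\theta(1-\mua)$, where the factor $\max\{1,1/((\Delta_R)^a\widetilde{\theta})\}$ in the definition of $w$ is precisely the lower bound on $m_F$ forcing this in the linear regime, while in the saturated $F$-regime it follows from $\eta\le\eta^*_{\widetilde{\theta}}\le\eta^*_\theta$ together with $\widetilde{\theta}\ge\theta$. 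The test ``$\theta>f(\theta,\delta)$'' and, when it fails, the redefinition of $\widetilde{\theta}$ through $K_\delta,\kappa$ are what make the admissible window for $m_R$ (equivalently for $\epsilon_1$) nonempty, i.e.\ what reconcile ``$\beta_F^\eta\ge\theta_a$'' with ``$\beta_R^\eta\le\delta_a$'' simultaneously. I would also verify $\overline{\eta}<\eta^*_{\widetilde{\theta}}$ and $\eta\in(0,1-\mua)$ so the $\epsilon_2$-interval is nonempty. Given (B.i)--(B.iv), Theorem~\ref{thrm_AI} applied with $R,\gamma$ from \eqref{eqn_R_gamma} yields part~(i); moreover parts~(b),(c) of that theorem apply verbatim (we use its $R,\gamma$), so no NE has $\mu_0>0$ and no $\bmu_x$ with $x\in[0,\eta)\cup\{1-\mua\}$ is a NE --- hence every other NE is $\bmu_x$ for some $x\in(\eta,1-\mua)$.

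For part~(ii): for $x\in(\eta,1-\mua)$ both types $1$ and $2$ lie in the support, so by Definition~\ref{defn_NE_MFG} $\bmu_x$ is a NE iff $U(1,\bmu_x)=U(2,\bmu_x)$ (the inequality $U(1,\bmu_x)\ge U(0,\bmu_x)=\cnp$ being automatic from $\cp\ge\cnp$), that is, iff $\ce=(\gamma-1)R\,P_{\bmu_x}(S;\theta,\delta)/(\gamma-(\gamma-1)(x+\mua))$. I would then use monotonicity of the closed-form roots in $x$: $\beta_R^x$ is nonincreasing in $x$ (its derivative has the sign of $1-(1-\mua)m_R\le0$, since the choice of $w$ gives $m_R\ge\tfrac1{1-\mua}$), hence $\beta_R^x\le\beta_R^\eta\le\delta_a$ on all of $(\eta,1-\mua)$; and $\beta_F^x$ decreases and one shows it leaves the region ``$\ge\theta_a$'' exactly at $x=\eta^*_{\widetilde{\theta}}$. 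Then \eqref{eqn_simplified_prob_success} gives $P_{\bmu_x}(S)=1$ for $x\in(\eta,\eta^*_{\widetilde{\theta}}]$ and $P_{\bmu_x}(S)=1-p$ for $x\in(\eta^*_{\widetilde{\theta}},1-\mua)$. Substituting $R=\ce(1-\eta-\mua+\tfrac1{\gamma-1})$ from \eqref{eqn_R_gamma}, the indifference equation with $P=1$ has the unique solution $x=\eta$ (not in the open interval), and with $P=1-p$ the unique solution $x=x_\eta=\tfrac{p}{\gamma-1}+p(1-\mua-\eta)+\eta$; moreover $x_\eta>\eta$ always, and $x_\eta<1-\mua$ holds exactly because $\gamma>\underline{\gamma}(\eta)$ (the two inequalities have the same numerator, $1-\mua-\eta+p(\mua+\eta)$, after simplification). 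Hence $\bmu_{x_\eta}$ is a NE iff it falls in the ``$P=1-p$'' range, i.e.\ iff $x_\eta>\eta^*_{\widetilde{\theta}}$, and it is then the only NE besides $\bmu_\eta$; if $x_\eta\le\eta^*_{\widetilde{\theta}}$ there is no other NE.

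The main obstacle is the regime bookkeeping: one must track, for each $u\in\{R,F\}$ and over the relevant range of $x$, whether the $\min$ in the response is active, across the two structural sub-cases $(\Delta_R)^a\widetilde{\theta}\gtrless1$ and $\theta\gtrless f(\theta,\delta)$, while simultaneously verifying that the three nested windows ($\epsilon$ defining $\widetilde{\theta}$, $\epsilon_1$ defining $w$, $\epsilon_2$ defining $\eta$) are nonempty precisely under the stated hypotheses on $\theta,\delta$ and $K_\delta\ge0$. Translating the algebraic condition $\theta>f(\theta,\delta)$ (and, when it fails, the quadratic in $\widetilde{\theta}$ governed by $K_\delta,\kappa$) into nonemptiness of the $m_R$-window, and pinning the $F$-side threshold in part~(ii) to $\eta^*_{\widetilde{\theta}}$, is the crux of the computation.
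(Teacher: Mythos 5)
Your proposal is correct and follows essentially the same route as the paper: derive the piecewise-linear form of the tagging ODE and its closed-form attractors $\overline{\rho}_u(x)$ and $\alpha_u x/\rho_u(x)$ (the paper's Lemma \ref{lemma_beta_traj} and equations \eqref{eqn_betaF}--\eqref{eqn_betaR}), verify (B.i)--(B.iv) at $\bmu_\eta$ via the window conditions on $w$ and $\eta$ (the paper's Claims 1 and 2, plus Lemmas \ref{lemma_theta_tilde} and \ref{lemma_feasibility_w}), invoke Theorem \ref{thrm_AI} to rule out $\mu_0>0$ and $x\in[0,\eta)\cup\{1-\mua\}$, and finish with the same indifference computation and monotonicity of $\beta_u^x$ in $x$ to isolate $x_\eta$ as the only other candidate. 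Your explicit checks that $x_\eta<1-\mua$ is equivalent to $\gamma>\underline{\gamma}(\eta)$ and that the $P=1$ indifference equation forces $x=\eta$ are the same computations the paper leaves implicit.
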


\noindent The proof of above Theorem is in  Appendix (section \ref{appendix}). Thus, the OSN can design an AI game for any $(\theta, \delta)$ in the following:

\vspace{-4mm}
{\small
\begin{align}\label{eqn_feasible}
\hspace{-2mm}\RAI := \{(\theta, \delta) : \theta > f(\theta, \delta) \mbox{ or } \theta \leq f(\theta, \delta) \mbox{ with } K_\delta \geq 0\}.
\end{align}}Note that the above algorithm assumes the knowledge of user specific parameters, estimation of which is independent of linguistic barriers as discussed in the introduction.

We show in Lemma \ref{lemma_feasibility_w} that a feasible $w$ exists as per Algorithm \ref{alg_AI}.  It is important to note that the warning mechanism in \eqref{eqn_warning} is designed such that $\beta_F^\eta$ and $\beta_R^\eta$ correspond to $r = 1$ and $r < 1$ respectively, for some $\eta$. Such a design helps to ensure that $\beta_F^\eta \geq \theta_a$ and $\beta_R^\eta \leq \delta_a$. However, if the desired $\theta$ is small, then due to insufficient difference between $\theta$ and $\delta$, AI is not achievable. Then, we choose some $\widetilde{\theta} > \theta$ as in Algorithm \ref{alg_AI}. We formalise these ideas in the proof of Theorem \ref{thrm_response1}. In all, the OSN actually achieves 
$(\widetilde{\theta}, \delta)$-success at $\bmu_\eta$, for  $\widetilde{\theta} \geq \theta$ (see Lemma \ref{lemma_theta_tilde}).


The designed game $\G(R, \gamma, \omega)$ has a unique NE, which is AI if $x_\eta \leq \eta_{\widetilde \theta}^*$; else, there is another NE, $\bmu_{x_\eta}$. In the latter case, $x_\eta > \eta_{\widetilde \theta}^* \geq \eta$, and therefore, the performance of $\omega(\cdot)$ might degrade due to a larger proportion of type $1$ users at this NE. Next, we characterise  $\bmu_{x_\eta}$ (see proof in Appendix in section \ref{appendix}).
\begin{thm}\label{thrm_perf_x_eta}
Define $x_F := \xdoubleF$. Under the hypothesis of Theorem \ref{thrm_response1}, if $x_\eta > \eta^*_{\widetilde{\theta}}$, then:
\[
\beta_R^{x_\eta} \leq \delta_a, \ \beta_F^{x_\eta} \geq 
\begin{cases}
 \frac{1}{cw\alpha_R (\Delta_R)^a}, &\mbox{if } x_\eta \in (\eta^*_{\widetilde{\theta}}, x_F]. \\
 \alpha_F(1-\mua), &\mbox{if } x_\eta \in (x_F, 1-\mua). \mbox{\eop}
\end{cases}
\]
\end{thm}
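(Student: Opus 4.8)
The plan is to leverage Theorem~\ref{thrm_response1}, which already establishes that $\bmu_{x_\eta}$ is an NE whenever $x_\eta > \eta^*_{\widetilde\theta}$, and then to characterize the attractor pair $(\beta_R^{x_\eta}, \beta_F^{x_\eta})$ by directly analyzing the equilibrium equation $g_u(\beta) = 0$ of the ODE \eqref{eqn_ode} evaluated at $\bmu = \bmu_{x_\eta}$, using the specific response function \eqref{eqn_response1} and the warning mechanism \eqref{eqn_warning}. First I would substitute \eqref{eqn_warning} into \eqref{eqn_response1}; the key algebraic simplification is that $h(\alpha_R, \omega(\beta)) = c\alpha_R^a (w^{1/b}\alpha_R^{(1-a)/b}\beta^{1/b})^b = cw\alpha_R\beta$, a \emph{linear} function of $\beta$, so the $R$-post fixed-point equation $\alpha_R\eta + (1-\eta-\eta_a)\min\{cw\alpha_R\beta,1\} - \beta = 0$ is piecewise linear and solvable in closed form. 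For the $F$-post, $h(\alpha_F,\omega(\beta)) = cw\alpha_R (\Delta_R)^a \beta$ (using $\alpha_F = \Delta_R\alpha_R$ and $\Delta_u := \alpha_F/\alpha_u$), again piecewise linear. So the attractors are determined by whether $r = 1$ (saturated) or $r < 1$ at the relevant fixed point.

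The bound $\beta_R^{x_\eta} \le \delta_a$ should follow from monotonicity: since $x_\eta > \eta^*_{\widetilde\theta} \ge \eta = \overline\eta + \epsilon_2$ and the design of $w$ in Algorithm~\ref{alg_AI} was precisely calibrated (via the constraint $\epsilon_1 < \min\{1/\delta_a, \cdots\} - \cdots$) to force $\beta_R^\eta \le \delta_a$ at $\bmu_\eta$, I would show that increasing the type-1 fraction from $\eta$ to $x_\eta$ only decreases the fraction of fake $R$-tags — intuitively because type-1 users tag $R$ as fake with the small innate probability $\alpha_R$, whereas the warning pushes type-2 users' response upward; making this precise means differentiating the fixed-point solution with respect to $\eta$ (or comparing the two piecewise-linear equations directly) and checking the sign, which is routine once the linear form is in hand. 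For the $F$-post lower bound I would split on the two sub-cases exactly as stated: when $x_\eta \in (\eta^*_{\widetilde\theta}, x_F]$, the relevant fixed point sits in the unsaturated regime $r < 1$, and solving the linear equation $\alpha_F \eta_{x_\eta} + (1-\eta_{x_\eta}-\eta_a) cw\alpha_R(\Delta_R)^a\beta - \beta = 0$ and bounding below gives $\beta_F^{x_\eta} \ge \frac{1}{cw\alpha_R(\Delta_R)^a}$ (this is the value of $\beta$ at which $r$ would hit $1$, so the claim is that the fixed point lies at or beyond saturation threshold's reciprocal scaling); the definition $x_F = \xdoubleF$ is exactly the crossover value of $x$ at which the $F$-fixed-point transitions between the two regimes, which is how $x_F$ earns its name. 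When $x_\eta \in (x_F, 1-\mua)$, the response saturates, $r(\alpha_F,\omega) = 1$, so $g_F(\beta) = \alpha_F\eta_{x_\eta} + (1-\eta_{x_\eta}-\eta_a) - \beta$; since $\eta_{x_\eta} = x_\eta/(x_\eta + (1-x_\eta-\mua) + \mua)\cdot$ — wait, here $\mu_2 = 1-x_\eta-\mua$ and all users participate, so $1-\eta_{x_\eta}-\eta_a = \mu_2/(1-\mua)$ and the unique zero is $\beta_F^{x_\eta} = \alpha_F\eta_{x_\eta} + (1-\mua-x_\eta)/(1-\mua) \ge \alpha_F(1-\mua)$? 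I would verify the arithmetic — the claimed bound $\alpha_F(1-\mua)$ is a clean lower bound that should drop out because $\alpha_F < 1$ and the non-innate term contributes the rest, though the exact constant needs the precise normalization of $\eta_{x_\eta}$ checked against \eqref{eqn_fp}.

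The main obstacle I anticipate is \emph{keeping the regime bookkeeping correct}: one must show that for $x_\eta$ in each stated interval the attractor $\beta_R^{x_\eta}$ (resp. $\beta_F^{x_\eta}$) is indeed the fixed point in the claimed saturated/unsaturated branch \emph{and} is the unique asymptotically stable point — i.e., re-verifying analogues of (B.i)--(B.iv) at $\bmu_{x_\eta}$ rather than at $\bmu_\eta$, since Theorem~\ref{thrm_AI}'s DoA guarantee was stated for the designed $\eta$. Because $w$ and $\gamma$ were chosen to make the design work at $\eta$, one has to track how the two piecewise-linear branches and their crossover points move as the first coordinate slides from $\eta$ up to $x_\eta$, and confirm no spurious second equilibrium appears in $[0,1]$ and that the slope condition $\partial g_u/\partial\beta < 0$ persists. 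Once that structural picture is pinned down, the inequalities themselves are short computations with the linear forms above.
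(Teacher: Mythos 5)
Your overall strategy is the paper's: reduce $r$ to the piecewise-linear form $\min\{cw\alpha_R(\Delta_u)^a\beta,1\}$, get closed-form attractors on the two branches, use monotonicity in $x$ for the $\beta_R$ bound (via $\beta_R^{x_\eta}<\beta_R^\eta\le\delta_a$, already established at $\bmu_\eta$), and evaluate at the endpoints $x_F$ and $1-\mua$ for the $\beta_F$ bounds. The "regime bookkeeping" you flag as the main obstacle is exactly where the paper does the work — Lemma \ref{lemma_beta_traj} characterizes $\cA_u$ for \emph{every} $\eta\in(0,1-\eta_a)$, not just the designed one, so no re-verification of (B.i)--(B.iv) at $\bmu_{x_\eta}$ is needed; Theorem \ref{thrm_perf_x_eta} is then a three-line corollary of \eqref{eqn_betaF}--\eqref{eqn_betaR}.

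However, you have the two regimes \emph{swapped}, and that is a genuine error. From \eqref{eqn_betaF}: for $x\le x_F$ the attractor is the \emph{saturated} fixed point $\overline{\rho}_F(x)=\alpha_F x+1-x-\mua$ (there $\overline{\rho}_F(x)\ge 1/(cw\alpha_R(\Delta_R)^a)$, so $r=1$ at the fixed point), while for $x>x_F$ it is the \emph{unsaturated} one $\alpha_F x/\rho_F(x)$. You assert the opposite in both sub-cases. Your assignment is internally inconsistent: a solution of the unsaturated linear equation that exceeds the threshold $1/(cw\alpha_R(\Delta_R)^a)$ cannot be a fixed point of the full piecewise system (at that $\beta$ one has $r=1$, so the unsaturated equation no longer applies) — it is a spurious root of the wrong branch. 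Your final inequalities happen to survive only because the piecewise attractor is continuous at the crossover (both branches equal $1/(cw\alpha_R(\Delta_R)^a)$ at $x=x_F$ and $\alpha_F(1-\mua)$ at $x=1-\mua$) and both branches are decreasing in $x$; but a proof that identifies the wrong expression as the attractor does not establish the claim. A second, smaller slip: at $\bmu_{x_\eta}$ we have $\mu_0=0$, so $\mu_1+\mu_2+\mu_a=1$ and hence $\eta=x_\eta$, $\eta_a=\mua$, $1-\eta-\eta_a=1-x_\eta-\mua$; your tentative normalization $1-\eta-\eta_a=\mu_2/(1-\mua)$ is off by the factor $1-\mua$ and would corrupt the constants if carried through.
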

It is easily verifiable that $\beta_F^{x} = \widetilde{\theta}(1-\mua)$ for $x = \eta^*_{\widetilde{\theta}}$. Now, as one might expect, we prove in \eqref{eqn_betaF}, \eqref{eqn_betaR} that the proportion of fake tags should decrease with an increase in type $1$ users for any $u$-post. In view of this, Theorem \ref{thrm_perf_x_eta} states that if the proportion of type $1$ users, \hide{$x_\eta \leq \eta^*_{\widetilde{\theta}}$, then, the second NE ($\bmu_{x_\eta}$) achieves $(\widetilde{\theta}, \delta)$-success, and hence $\td$. Then we have a game with both the NEs as  AI. 
Even when  }$x_\eta > \eta^*_{\widetilde{\theta}}$, the proposed warning mechanism does not degrade the quality of the tagging process for $R$-post, as from Theorem \ref{thrm_perf_x_eta}, we have $\beta_R^{x_\eta} \leq \delta_a$. However, the users can not identify the $F$-post up to $\theta_a(\bmu_{x_\eta})$-level. Theorem \ref{thrm_perf_x_eta} provides the worst performance of the warning for tagging of $F$-posts.

We next numerically comment upon a more detailed performance of $\bmu_{x_\eta}$ for $F$-post using the normalized degradation metric, ${\cal P} := \nicefrac{(\theta_a(\bmu_{x_\eta}) - \beta_F^{x_\eta})100}{\theta_a}$. Towards this, we consider a large number of samples/configurations of system parameters chosen randomly and independently from some appropriate uniform distributions to obtain the fraction of configurations that achieve AI; we also obtain  the fraction of configurations that have   ${\cal P} < 10\%$. We pick  $\alpha_R \sim  U(0.25, 0.3),\ \mua \sim U(0, 0.2),\ a \sim U(2,3), \ p \sim U(0, 0.5) \mbox{ and } \delta = \alpha_R + 0.01$. 
Define $d$ as the normalised difference  between the innate capacity of users to identify $F, R$-posts, i.e., $d = \nicefrac{(\alpha_F - \alpha_R)}{\alpha_F}$.
We generate $10000$ samples for different values of $d$. 
%
%

Now, say that the OSN demands $\theta = 0.75$. Then, it can always design an AI game (for all random configurations) if $d \geq 0.01$. 
Further, $21.16\%$ of samples have ${\cal P} < 10\%$ for $d = 0.08$, which gradually increases to $58.57\%$ as users get smarter, $d = 0.28$. Thus, if the OSN aims to achieve higher performance with respect to $\bmu_{x_\eta}$ as well, then it requires users to be slightly more intelligent (higher $d$).



\old{Before we state the result, we introduce few notations:
\begin{align}
        \underline{\gamma}_2^*(\eta) :=  1-\frac{p}{q(\eta)}, \mbox{ where}
\end{align}the function $q(\eta)$ is defined as:
\begin{align}\label{eqn_func_q}
    q(\eta) &:=  (1-\mua)\left(p - \frac{1-\theta}{1-\alpha_F} \right) + \eta (1-p).
\end{align}
Next, define the following terms which will provide the possible proportion of type $1$ users under to-be designed AI game (see Theorem \ref{thrm_response1}):
\begin{align}
    \eta^* &:= \frac{(1-\mua)(1-\theta)}{1-\alpha_F}, \ \eta_*:= \frac{\eta^* - p(1-\mua)}{1-p}, \nonumber \\
    \underline{\eta} &:= \max\left \{ (1 - \mu_a)(1 - \theta),                     \frac{(1-\delta)(1-\mua)\mua}{\delta(1-\mua) + \mua-\alpha_R} \right\}.     \nonumber
\end{align}
Denote the regime of parameters for which OSN can design an AI game (see Theorem \ref{thrm_response1}) as:

\vspace{-2mm}
{\small
\begin{align*}
\mathcal{R}_{AI} &:= \{(p, \eta, \gamma) : 0 < p \in \frac{\eta^*-\underline{\eta}}{1-\mua-\underline{\eta}}, \eta \in (\underline{\eta}, \min\left\{\eta^*, \eta_*\right\})\\
&\hspace{30mm}\mbox{ and } \gamma > \max\{ \underline{\gamma}_1^*(\eta), \underline{\gamma}_2^*(\eta)\}.
\end{align*}}
It is natural to assume a threshold on the proportion of $a$-users, and thus:

\noindent \textbf{(B)} Assume $\mua < \Gamma := \frac{(\delta-\alpha_R)(1-\theta)}{(1-\delta)(\theta-\alpha_F)}.$
\begin{thm}\label{thrm_response1}
Assume \textbf{(B)} and let $\cp \geq \cnp$. Consider the response function as in \eqref{eqn_response1}. Let $\Delta := \frac{\alpha_F}{\alpha_R} > 1$ with $\Delta^a \theta(1-\mua) > 1$. Consider the warning mechanism $\omega(\cdot)$ defined as,

\vspace{-2mm}
{\small
\begin{align}\label{eqn_warning_response1}
 \omega( \beta)= w^{1/b} \alpha_R^{(1-a)/b} \beta^{1/b}, \mbox{ with }
 w := \frac{1}{c \alpha_R \Delta^a \theta (1-\mua)}.
\end{align}}
Let $R$ be as in Lemma \ref{lemma_R_gamma}. Then, the following are true:

\noindent (i) If $(p , \eta, \gamma) \in \mathcal{R}_{AI}$, then $\G(R, \gamma, \omega)$ is an AI game, with $\bmu^* = (0, \eta, 1-\eta - \eta_a)$ as the unique equilibria of the game.
    
\noindent (ii) If $p \in (0,1)$, $\eta \in (\max\{\underline{\eta}, \eta_*\}, \eta^*)$ and $\gamma > \underline{\gamma}_1^*(\eta)$, then $\G(R, \gamma, \omega)$ has only two NE -
    
    (a) $\bmu_1^* := (0, \eta, 1-\eta - \eta_a)$ with $P_{\bmu_1^*}(S) = 1$, and
    
    (b) $\bmu_2^* := (0, \eta_0, 1-\eta_0 - \eta_a)$, where 
    \begin{align}\label{eqn_eta_0}
        \eta_0 = \frac{p + (\gamma-1)[p(1-\mua-\eta)+\eta]}{\gamma-1},
    \end{align}with $P_{\bmu_2^*}(S) = 1-p$.
\eop
\end{thm}}

\vspace{-2mm}
\section{Conclusion}
OSNs are flooded with fake posts and several techniques have been proposed to detect the same. A significant fraction of them depend upon crowd signals; however, none focusses on the limited willingness of the crowd to participate. We filled this gap by formulating an appropriate (mean-field) participation game where the users are encouraged to provide their responses (fake/real) for each post via a simple reward-based scheme. Further,  our algorithm ensures minimal wrong judgement of real/authentic posts and maximal actuality identification of the fake ones.

We proposed a simple warning mechanism for the polynomial response function of the users. Our mechanism is robust against adversarial users, independent of language barriers and continually guides the users to make more informed decisions by utilizing the warning signals shared by the OSN. 
Under our design, the resultant game always has a Nash Equilibrium (NE), which meets the desired objective. We also identify the condition in which there exists another NE; it achieves the desired identification level for real posts, but fails for achieving the desired level for fake posts. 


\bibliographystyle{IEEEtran}

\section{Appendix}\label{appendix}
\noindent \underline{\textbf{Proof of Lemma \ref{lemma_beta}:} }
The proof of this Lemma follows from \cite[Theorem 2.1, pp. 127]{kushner2003stochastic} under \textbf{(A)}, if further assumptions (A2.1)-(A2.5) of the cited Theorem hold, which we prove next. At first, observe $\sup_n E[L_{u, n}^2] < 2 < \infty$ (since from \eqref{eqn_dynamics_beta}, $\beta_{u, n} \leq 1$) for each $u$. Further, from \eqref{eqn_cond_exp}, there is no bias term as in the cited Theorem and $g_u(\cdot)$ is Lipschitz continuous. Lastly, $\sum_{i\geq 1} \epsilon_i^2 < \infty$ (for $\epsilon_i := 1/i$). \eop





\vspace{2mm}

\noindent \underline{\textbf{Proof of Theorem \ref{thrm_AI}:} } Consider $u \in \{R, F\}$. 
By hypothesis (B.i), (B.ii), $g_u(\beta_u^\eta) = 0$ for some $\beta_u^\eta$. Further, by hypothesis (B.iv), i.e., local stability, $g_u(\beta_u) > 0$ for all $\beta_u \in (\beta_u^\eta-\epsilon, \beta_u^\eta)$,  and $g_u(\beta_u) < 0$ for all $\beta_u \in (\beta_u^\eta, \beta_u^\eta + \epsilon)$, for some $\epsilon > 0$. Since $g_u(\cdot)$ is a continuous function with unique zero (see hypothesis (B.iii)), $g_u(\beta_u) > 0$ for all $\beta_u \in [0, \beta_u^\eta)$ and $g_u(\beta_u) < 0$ for all $\beta_u \in (\beta_u^\eta, 1]$. Thus, $t \mapsto \beta_u(t)$ is strictly increasing and decreasing, if $\beta_u(0) = \beta_u \in [0, \beta_u^\eta)$ and $(\beta_u^\eta, 1]$ respectively.  This implies that the assumption \textbf{(A)} of Lemma \ref{lemma_beta} is satisfied with $\cA_u = \{\beta_u^\eta\}$ and $\cD_u = [0,1]$ for each $u$. Thus, by Lemma \ref{lemma_beta}, $\beta_{u, k} \to \beta_u^\eta$ w.p. $1$. 

For the given $R, \gamma$, we now prove (a)-(c) for the game $\G(R, \gamma, \omega)$. By hypothesis (B.i), (B.ii) and above arguments, $P_{\bmu_\eta}(S; \theta, \delta) = 1$ (see \eqref{eqn_simplified_prob_success}). Also, from \eqref{eqn_util} and \eqref{eqn_R_gamma}, {\small$U(1, \bmu_\eta) = U(2, \bmu_\eta) > U(0, \bmu_\eta)$}. Thus, $\S(\bmu_\eta) = \mbox{arg max}_s U(s, \bmu)$, which by definition \ref{defn_NE_MFG} implies that $\bmu_\eta$ is an AI-NE. Hence, part (a). 

Now, if possible, let $\bmu$ be another NE such that $P_{\bmu}(S; \theta, \delta) =: q \in [0,1]$. By \eqref{eqn_util}, {\small$U(1, \bmu) \geq U(0, \bmu)$}, as $Q_p \ge Q_{np}$. 

First consider the case with $U(1, \bmu) > U(0, \bmu)$. Thus, $0 \notin \S(\bmu)$, and hence, any $\bmu$ with $\bmu_0 > 0$ can not be a NE for this case (see definition \ref{defn_NE_MFG}). For the rest, we divide the proof in two sub-cases:

$\bullet$ If $\S(\bmu) = \{1\}$, then $\bmu = \bmu_{1-\mua}$. By Lemma \ref{lemma_beta_only_type1}, $\beta_F(\bmu) < \theta_a(\bmu)$ and $\beta_R(\bmu) < \delta_a(\bmu)$. Thus, $\bmu$ can be a NE (if at all) when $q = 1-p$. Further, $\bmu$ being a NE implies that $U(1, \bmu) \geq U(2, \bmu)$, with utility function as in \eqref{eqn_util}. That is, $R, \gamma$ should satisfy the following relation:
$$
    R(1-p) (\gamma-1) \leq \ce, \mbox{ i.e., } g(\gamma) \leq 1, \mbox{ where}
$$
$g(\gamma) := (\gamma-(\gamma-1)(\eta+\mua)) (1-p)$. Observe, $\gamma \mapsto g(\gamma)$ is increasing and $g(\underline{\gamma}(\eta)) = 1$; thus, $g(\gamma) > 1$ for $\gamma$ given in hypothesis. This contradicts $\bmu$ being a  NE. 

$\bullet$ If $\S(\bmu) = \{2\}$ or $\{1, 2\}$, then $\bmu = \bmu_x$ for some $x\in [0, 1-\mua) - \{\eta\}$. One can show that $U(1, \bmu) > U(2, \bmu)$ (for any $q \in [0,1]$). Thus, $\S(\bmu) \not\subset \mbox{argmax}_s U(s, \bmu) = \{1\}$, which contradicts $\bmu$ being a  NE. 



Lastly, consider the case where $U(1, \bmu) = U(0, \bmu)$; this is possible only when $q = 0$ and $\cnp = \cp$. Clearly,  $\cp > \cp - \ce = U(2, \bmu)$. 
Thus, $2 \notin \S(\bmu)$. If $\bmu = \bmu_{1-\mua}$, from Lemma \ref{lemma_beta_only_type1}, $\beta_R(\bmu) < \delta_a(\bmu)$. This implies, $q \neq 0$, which is a contradiction. Another possibility for $\bmu$ is $(1-\mua, 0, 0)$ which can not be NE as $\beta_F(\bmu) = 0 = \theta_a(\bmu)$, leading to $q \geq p \neq 0$. The last possibility for $\bmu$ is $(x, 1-x-\mua, 0)$ for any $x \in (0, 1-\mua)$, for which we have:
\begin{align*}
\beta_R(\bmu) &= \alpha_R\left(\frac{1-x-\mua}{1-x} \right) < \delta \left(\frac{1-x-\mua}{1-x} \right) = \delta_a(\bmu).
\end{align*}Thus, $q \neq 0$; hence, any such $\bmu$ also can not be a NE.  \eop

\begin{lemma}\label{lemma_beta_only_type1}
For $\bmu = \bmu_{1-\mua}$,  $\beta_{u, k}(\bmu) \to \alpha_u(1-\mua)$ w.p. $1$, as $k \to \infty$, for each $u$. 
\end{lemma}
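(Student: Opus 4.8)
The plan is to specialize the tagging dynamics \eqref{eqn_ode} to the configuration $\bmu = \bmu_{1-\mua} = (1-\mua, 0, 0)$ — wait, I need to be careful with the indexing here. Actually $\bmu_{1-\mua}$ should denote the profile with $\mu_1 = 1-\mua$, i.e., $\bmu_{1-\mua}=(0, 1-\mua, 0)$ in the $(\mu_0,\mu_1,\mu_2)$ convention, so that all $na$-users are type $1$. With this choice, $\eta(\bmu) = \frac{\mu_1}{\mu_1+\mu_2+\mua} = \frac{1-\mua}{1} = 1-\mua$ and $\eta_a(\bmu) = \frac{\mua}{1} = \mua$, so $1-\eta-\eta_a = 0$. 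First I would substitute these values into $g_u$ from \eqref{eqn_cond_exp}: the warning-dependent term $(1-\eta-\eta_a)r(\alpha_u,\omega(\beta))$ vanishes identically, leaving $g_u(\beta) = \alpha_u(1-\mua) - \beta$, which is affine in $\beta$ with slope $-1 < 0$. Hence the ODE $\dot\beta_u = \alpha_u(1-\mua) - \beta_u$ has the unique equilibrium $\beta_u^* = \alpha_u(1-\mua)$, it is globally asymptotically stable on $[0,1]$ (note $\alpha_u(1-\mua)\in[0,1]$ since $\alpha_u\le 1$ and $\mua\ge 0$), so $\cA_u = \{\alpha_u(1-\mua)\}$ and $\cD_u = [0,1]$.

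Next I would check that assumption \textbf{(A)} holds trivially here: since $\cD_u = [0,1]$ contains the entire state space and $\beta_{u,k}\in[0,1]$ always by \eqref{eqn_dynamics_beta}, we have $\beta_{u,k}\in\cD_u$ for every $k$, so certainly infinitely often, w.p.\ $1$. Then Lemma \ref{lemma_beta} applies directly and yields $\beta_{u,k}(\bmu) \to \cA_u = \{\alpha_u(1-\mua)\}$ w.p.\ $1$ as $k\to\infty$, for each $u\in\{R,F\}$, which is exactly the claim.

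There is essentially no hard part — this lemma is a degenerate special case of the general machinery already set up, and the only thing to be careful about is bookkeeping: confirming that $1-\eta-\eta_a$ really is zero for this profile (so the nonlinear $r$ term disappears and the fixed-point equation \eqref{eqn_fp} becomes linear), and confirming that the resulting equilibrium value $\alpha_u(1-\mua)$ lies in $[0,1]$ so that it is a legitimate attractor inside the interval on which the ODE analysis of Lemma \ref{lemma_beta} is carried out. An alternative, even shorter route would be to observe directly from \eqref{eqn_fp} that with $\eta = 1-\mua$, $1-\eta-\eta_a=0$, the unique solution of the fixed-point equation is $\beta_u(\bmu) = \alpha_u(1-\mua)$, and then invoke Lemma \ref{lemma_beta} for the almost-sure convergence; I would present whichever is cleaner given the surrounding notation, but the ODE version is the more self-contained one since it also delivers the asymptotic stability needed to verify \textbf{(A)}.
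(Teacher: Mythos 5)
Your proposal is correct, but it takes a different (heavier) route than the paper. The paper's own proof is a one-liner: for $\bmu_{1-\mua}=(0,1-\mua,0)$ there are no type $2$ users, so there is no feedback through the warning at all; each participant's fake-tag indicator is i.i.d.\ Bernoulli with success probability $\alpha_u\eta=\alpha_u(1-\mua)$ (type $1$ w.p.\ $\eta$ tagging fake w.p.\ $\alpha_u$, $a$-users never tagging fake), hence $\beta_{u,k}=\frac{1}{k}\sum_{i\le k}1_{\{\mathrm{tag}=F\}}$ is a plain sample mean and the strong law of large numbers gives the limit directly. You instead push the degenerate case through the stochastic-approximation machinery: you observe that $1-\eta-\eta_a=0$ kills the $r$ term, so $g_u(\beta)=\alpha_u(1-\mua)-\beta$ is affine with the unique globally asymptotically stable equilibrium $\alpha_u(1-\mua)$, verify that $\cD_u=[0,1]$ makes assumption \textbf{(A)} trivial, and invoke Lemma \ref{lemma_beta}. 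Both arguments are valid; the paper's is more elementary and makes transparent that no ODE analysis is needed once the feedback vanishes, while yours is uniform with the general framework and has the minor virtue of explicitly exhibiting $\cA_u$ and checking \textbf{(A)} rather than sidestepping Lemma \ref{lemma_beta} altogether. Your bookkeeping (the identification $\bmu_{1-\mua}=(0,1-\mua,0)$, the values of $\eta,\eta_a$, and the membership $\alpha_u(1-\mua)\in[0,1]$) is all correct.
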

\begin{proof}
From \eqref{eqn_dynamics_beta}, $\beta_{u, k}$ can be re-written as follows:
\begin{align*}
    \beta_{u, k}(\bmu) &= \frac{\sum_{i=1}^k 1_{\{\mbox{tag for $u$-post} = F\}}}{k}.
\end{align*}
Thus, $\beta_{u, k}(\bmu) \to \alpha_u (1-\mua)$ w.p. $1$, as $k \to \infty$, by strong law of large numbers, and \eqref{eqn_cond_exp}.
\end{proof}

\noindent \underline{\textbf{Proof of Theorem \ref{thrm_response1}:}} The proof is in $3$ steps:

\noindent (a) $\bmu_\eta$ is an AI-NE such that $\beta_F^\eta \geq \widetilde{\theta}(1-\mua) \geq \theta(1-\mua)$ and $\beta_R^\eta < \delta_a = \delta_a(\bmu_\eta)$,

\noindent (b) by Theorem \ref{thrm_AI}, any $\bmu$ with $\mu_0 > 0$ or any $\bmu_x$ for $x \in [0, \eta) \cup \{1-\mua\}$ is not a NE,

\noindent (c) $\bmu_{x_\eta}$ can be the only other NE, if at all $x_\eta > \eta_{\widetilde \theta}^*$ and $\beta_F^{x_\eta} < \theta(1-\mua)$. 


Define $x_F := \frac{1-\mua - \frac{1}{cw\alpha_R(\Delta_R)^a}}{1-\alpha_F}$.
Define $\rho_F(x) := 1-(1-x-\mua)cw\alpha_R(\Delta_R)^a$ for $x \in (0,1)$. Then, $\rho_F(x) = 0$ for $x =  x_F(1-\alpha_F)$. Also, $\rho_F(x)$ is increasing in $x$. Therefore, $\rho_F(x) < 0$ for $x < x_F(1-\alpha_F)$ and $\rho_F(x) > 0$ for $x >  x_F(1-\alpha_F)$. 

Next define $\overline{\rho}_F(x) := \alpha_F x + 1 - x - \mua$ for $x \in (0,1)$. Then, $\overline{\rho}_F(x) = \frac{1}{cw\alpha_R(\Delta_R)^a}$ for $x = x_F$ and $\overline{\rho}_F(x)$ is decreasing in $x$. Therefore, $\overline{\rho}_F(x) \in {\cal R}_F:= \left\{y: y < \frac{1}{cw\alpha_R(\Delta_F)^a}\right\}$ for all $x > x_F$ and $\overline{\rho}_F(x) \in {\cal R}_F^c$ for all $x \leq x_F$.

In all, by above, $\overline{\rho}_F(x) \in {\cal R}_F^c$ for $x \leq x_F(1-\alpha_F)$. If not, $\rho_F(x) > 0$ for all $x > x_F(1-\alpha_F)$, and then by Lemma \ref{lemma_beta_traj}, both $\overline{\rho}_F(x)$ and $\frac{\alpha_F x}{\rho_F(x)}$ are in ${\cal R}_F^c$ for $x \in (x_F(1-\alpha_F), x_F]$; both $\overline{\rho}_F(x)$ and $\frac{\alpha_F x}{\rho_F(x)}$ are in ${\cal R}_F$ for $x > x_F$. Further, by Lemma \ref{lemma_beta_traj}, $\beta_F^x$ (the attractor of ODE \eqref{eqn_ode}) is given by:
\begin{align}\label{eqn_betaF}
\beta_F^x = 
\begin{cases}
 \overline{\rho}_F(x) \mbox{ if } x \in (0, x_F], \\
 \frac{\alpha_F x}{\rho_F(x)} \mbox{ if } x \in (x_F, 1).
\end{cases}
\end{align}

Similarly, again by Lemma \ref{lemma_beta_traj}, one can show that $\beta_R^x$ is:
\begin{align}\label{eqn_betaR}
\beta_R^x = 
\begin{cases}
 \overline{\rho}_R(x) \mbox{ if } x \in (0, x_R], \\
 \frac{\alpha_R x}{\rho_R(x)} \mbox{ if } x \in (x_R, 1),
\end{cases}
\end{align}
for $\overline{\rho}_R(x) := \alpha_R x + 1 - x - \mua$, $\rho_R(x) := 1-(1-x-\mua)cw\alpha_R$ and $x_R := \frac{1-\mua-\frac{1}{cw\alpha_R}}{1-\alpha_R}$.


 Observe that by the choice of $w$ as in Algorithm \ref{alg_AI}, 
\begin{align}\label{eqn_bound_cwalphaR}
    cw\alpha_R > \frac{1}{(\Delta_R)^a \widetilde{\theta} (1-\mua)}.
\end{align}
Thus, from \eqref{eqn_notations}, $\eta^*_{\widetilde{\theta}} < x_F$. Consider any $x \leq \eta^*_{\widetilde{\theta}}$. By \eqref{eqn_betaF}, 
$\beta_F^x = \overline{\rho}_F(x)$. Since $\beta_F^x$ strictly decreases with $x$, therefore, $\beta_F^x \geq \beta_F^{\eta^*_{\widetilde{\theta}}} = \widetilde{\theta}(1-\mua) \geq \theta(1-\mua)$, by Lemma \ref{lemma_theta_tilde}. 

Observe that $\eta = \overline{\eta} + \epsilon_2 \leq \eta^*_{\widetilde{\theta}}$ (see claim $1$ at the end of the proof for details), thus proving conditions (B.ii), (B.iii) and (B.iv) for $u = F$ of Theorem \ref{thrm_AI} for $\bmu_\eta$.


Again by the choice of $w, \epsilon_2$ and $\delta>\alpha_R$, $\eta - x_R > \overline{\eta}-x_R > 0$ (see claim $2$ at the end of the proof). Therefore, by \eqref{eqn_betaR}, $\beta_R^\eta = \frac{\alpha_R \eta}{{\rho}_R(\eta)}$. 
By the choice of $w$ and since $\beta_R^x$ strictly decreases as $x$ increases, we have, $\beta_R^\eta < \beta_R^{x_R} = 1/(cw \alpha_R) < \delta_a$. 
This proves conditions (B.i), (B.iii) and (B.iv) for $u = R$ of Theorem \ref{thrm_AI}  for $\bmu_\eta$. 

In all, by Theorem \ref{thrm_AI}, $\G(R, \gamma, \omega)$ is an AI game with $\bmu_\eta$ as a NE such that it achieves $(\widetilde{\theta}, \delta)$-success (i.e., $\td$ as $\widetilde{\theta} \geq \theta$ by Lemma \ref{lemma_theta_tilde}); further, any $\bmu_x$ for $x\in [0, \eta) \cup \{1-\mua\}$  and any $\bmu$ with $\mu_0 > 0$ can not be a NE, by Theorem \ref{thrm_AI}. This complete steps (a) and (b).

Consider any $x \in (\eta, 1-\mua)$. Since $\beta_R^x$ decreases in $x$, $\beta_R^x < \beta_R^\eta < \delta_a$. This proves (b). Recall $\beta_F^x \geq \theta(1-\mua)$ for $x \in (\eta, \eta^*_{\widetilde{\theta}}]$. Thus:
$$
P_{\bmu_x}(S; \widetilde{\theta}, \delta) = 1 \mbox{ for each } x \in (\eta, \eta^*_{\widetilde{\theta}}].
$$
For the given $R, \gamma$ and chosen $x$, one can show that $U(1, \bmu_x) < U(2, \bmu_x)$. Thus, $\S(\bmu_x) = \{1,2\} \not\subset \mbox{argmax}_s u(s, \bmu) = \{2\}$; under definition \ref{defn_NE_MFG}, $\bmu_x$ is not a NE. In fact, if $P_{\bmu_x}(S;\theta, \delta) = 1$ for some $x \in (\eta^*_{\widetilde{\theta}}, 1-\mua)$, then again using above arguments, one can show that $\bmu_x$ is not a NE. 

Recall $\beta_R^x < \delta_a$ for each $x \in (\eta, 1-\mua)$. Further by definition of $x_\eta$, $U(1, \bmu_x) = U(2, \bmu_x)$ only for $x = x_\eta$ with $
P_{\bmu_x}(S; \theta, \delta) = 1-p$; further, by \eqref{eqn_util}, $x_\eta$ is the only such possible $x$. Thus, by definition \ref{defn_NE_MFG}, $\bmu_{x_\eta}$ is a NE, but not AI-NE, if at all $x_\eta > \eta^*_{\widetilde{\theta}}$ and $\beta_F^{x_\eta} < \theta(1-\mua)$. This completes step (c). 

Now, we will prove the sub-claims made above. 

\noindent \underline{Claim 1:} $\eta^*_{\widetilde{\theta}}>\overline{\eta}$. Let us consider the difference:
\begin{align*}
        \eta^*_{\widetilde{\theta}} - \overline{\eta} &=  \eta^*_{\widetilde{\theta}} - \frac{\delta_a((1-\mua)cw\alpha_R - 1)}{cw\alpha_R \delta_a - \alpha_R}\\
        &= \frac{cw\alpha_R   \delta_a (\eta^*_{\widetilde{\theta}} - (1-\mua))  - \eta^*_{\widetilde{\theta}} \alpha_R + \delta_a }{cw\alpha_R \delta_a - \alpha_R}\\
        &=  \frac{-(1 - \mua -  \eta^*_{\widetilde{\theta}})  \delta_a\left( cw\alpha_R   - \frac{- \eta^*_{\widetilde{\theta}} \alpha_R + \delta_a}{\delta_a (1 - \mua  - \eta^*_{\widetilde{\theta}} )} \right) }{cw\alpha_R \delta_a - \alpha_R}
\end{align*}
Now, $1 - \mua >  \eta^*_{\widetilde{\theta}} $. Further, by the choice of $w$, $cw\alpha_R < \fone$, 
which implies the numerator in $\eta^*_{\widetilde{\theta}} - \overline{\eta}$ is strictly positive. Furthermore, we have $cw\alpha_R > \frac{1}{1-\mua} > \frac{\alpha_R}{\delta(1-\mua)}$. This implies that the denominator  in $\eta^*_{\widetilde{\theta}} - \overline{\eta}$  is strictly positive. Therefore, $\eta^*_{\widetilde{\theta}}>\overline{\eta}$.

\noindent \underline{Claim 2:} $\eta > x_R$

\vspace{-4mm}
{\small
\begin{align*}
    \eta - x_R &> \overline{\eta} - x_R \\
    &\hspace{-6mm}= \frac{\delta_a((1-\mua)cw\alpha_R - 1)}{cw\alpha_R \delta_a - \alpha_R} - \frac{1}{1-\alpha_R} \left( 1-\mua-\frac{1}{cw\alpha_R}\right)\\
    &\hspace{-6mm}= ((1-\mua)cw\alpha_R - 1) \left( \frac{\delta_a }{cw\alpha_R \delta_a - \alpha_R} - \frac{1}{cw\alpha_R(1-\alpha_R)} \right)\\
    &\hspace{-6mm}= \frac{((1-\mua)cw\alpha_R-1) (1-cw\alpha_R\delta_a)}{cw\alpha_R(1-\alpha_R) (cw\alpha_R\delta_a - \alpha_R)} > 0 \mbox{ (by choice of $w$)}.  
\end{align*}} \eop

\begin{lemma}\label{lemma_beta_traj}
    Define $\overline{\rho}_u := \alpha_u\eta + 1 - \eta - \eta_a$ and $\rho_u := 1 - (1-\eta-\eta_a)cw\alpha_R(\Delta_u)^a$ for $\eta \in (0, 1-\eta_a)$ and $u \in \{R, F\}$. Consider the regime, ${\cal R}_u := \left\{x: x < \frac{1}{cw\alpha_R(\Delta_u)^a}\right\}$. Then, for the response function given in \eqref{eqn_response1}, the following statements are true:
    \begin{enumerate}
        \item if $\rho_u \leq 0$, then $\overline{\rho}_u \in {\cal R}_u^c$, and the attractors of ODE \eqref{eqn_ode}, ${\cal A}_u = \{\overline{\rho}_u\}$; 
        \item if $\rho_u > 0$, then $\overline{\rho}_u \in {\cal R}_u^c$ if and only if $\frac{\alpha_u\eta}{\rho_u} \in {\cal R}_u^c$. Further, if $\overline{\rho}_u \in {\cal R}_u$ then ${\cal A}_u = \{\frac{\alpha_u\eta}{\rho_u}\}$, while if $\overline{\rho}_u \in {\cal R}^c_u$ then ${\cal A}_u = \{\overline{\rho}_u\}$.
    \end{enumerate}
\end{lemma}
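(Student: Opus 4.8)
The plan is to note that, once the explicit polynomial response \eqref{eqn_response1} and warning \eqref{eqn_warning} are substituted, the right-hand side $g_u$ of the ODE \eqref{eqn_ode} becomes a continuous, piecewise-affine function of $\beta$ with a single breakpoint, after which both statements reduce to an elementary sign analysis. So first I would do the substitution: plugging $\omega(\beta)=w^{1/b}\alpha_R^{(1-a)/b}\beta^{1/b}$ into $h(\alpha_u,\omega)=c\alpha_u^a\omega^b$ gives $h(\alpha_u,\omega(\beta))=cw\,\alpha_u^{a}\alpha_R^{1-a}\,\beta = M_u\beta$ with $M_u:=cw\alpha_R(\Delta_u)^a$, so $r(\alpha_u,\omega(\beta))=\min\{M_u\beta,1\}$, and the set ${\cal R}_u=\{x:x<1/M_u\}$ is precisely the range on which this $\min$ is unsaturated. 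Hence, with $\rho_u=1-(1-\eta-\eta_a)M_u$ and $\overline{\rho}_u=\alpha_u\eta+1-\eta-\eta_a$,
\[
g_u(\beta)=\begin{cases}\alpha_u\eta-\rho_u\beta,& 0\le\beta\le 1/M_u,\\ \overline{\rho}_u-\beta,& 1/M_u\le\beta\le 1,\end{cases}
\]
the second line being vacuous when $1/M_u>1$; a one-line check shows the two expressions agree at $\beta=1/M_u$, so $g_u$ is continuous on $[0,1]$, affine on each piece with slopes $-\rho_u$ and $-1$. Since $\alpha_u<1$ and $\eta>0$ give $\overline{\rho}_u<1-\eta_a\le 1$, a short computation yields $g_u(0)=\alpha_u\eta>0>g_u(1)$ in every case (in the purely unsaturated case $M_u<1$ one uses $\rho_u>\eta+\eta_a>\alpha_u\eta$), so $[0,1]$ is forward invariant for \eqref{eqn_ode}.

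Next I would split on the sign of $\rho_u$. If $\rho_u\le 0$, then $M_u\ge 1/(1-\eta-\eta_a)$, whence $1/M_u\le 1-\eta-\eta_a\le\overline{\rho}_u$, so $\overline{\rho}_u\in{\cal R}_u^c$; on $[0,1/M_u]$ the slope $-\rho_u\ge 0$ keeps $g_u\ge\alpha_u\eta>0$, and on $[1/M_u,1]$, $g_u(\beta)=\overline{\rho}_u-\beta$ vanishes only at $\overline{\rho}_u\in[1/M_u,1)$. Thus $g_u>0$ on $[0,\overline{\rho}_u)$ and $g_u<0$ on $(\overline{\rho}_u,1]$, the standard scalar-ODE picture of a globally attracting equilibrium: every trajectory in $[0,1]$ is monotone and converges to $\overline{\rho}_u$, so ${\cal A}_u=\{\overline{\rho}_u\}$ — statement 1. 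If instead $\rho_u>0$, then $g_u$ is continuous and \emph{strictly} decreasing on all of $[0,1]$, so by the same argument it has a unique, globally attracting zero $\beta^*$, i.e.\ ${\cal A}_u=\{\beta^*\}$, and it only remains to locate $\beta^*$. When $1/M_u>1$ only the first branch acts on $[0,1]$, so $\beta^*=\alpha_u\eta/\rho_u<1<1/M_u$, and also $\overline{\rho}_u<1<1/M_u$, so both $\overline{\rho}_u$ and $\alpha_u\eta/\rho_u$ lie in ${\cal R}_u$, matching the claim. When $1/M_u\le 1$, the key identity is $g_u(1/M_u)=\overline{\rho}_u-1/M_u=\alpha_u\eta-\rho_u/M_u$; since $\rho_u>0$, the sign of $g_u(1/M_u)$ is at once the sign of $\overline{\rho}_u-1/M_u$ and of $\alpha_u\eta/\rho_u-1/M_u$, which gives the ``if and only if'' in statement 2. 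Finally, because $g_u$ is decreasing, $g_u(1/M_u)\ge 0$ forces $\beta^*\in[1/M_u,1)$, where $g_u(\beta)=\overline{\rho}_u-\beta$, so $\beta^*=\overline{\rho}_u$ (and $\overline{\rho}_u\in{\cal R}_u^c$); while $g_u(1/M_u)<0$ forces $\beta^*\in(0,1/M_u)$, where $g_u(\beta)=\alpha_u\eta-\rho_u\beta$, so $\beta^*=\alpha_u\eta/\rho_u$ (and $\overline{\rho}_u\in{\cal R}_u$) — exactly statement 2.

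The conceptual content is light; the only places to be careful are bookkeeping ones: verifying continuity of $g_u$ at the breakpoint, and isolating the degenerate sub-case $1/M_u>1$ (breakpoint outside $[0,1]$) so that the piecewise formula and the location of $\beta^*$ stay correct. The asymptotic-stability and domain-of-attraction claims themselves are just the elementary fact that a scalar autonomous ODE whose Lipschitz right-hand side is positive below and negative above its unique zero drives every trajectory monotonically to that zero, so I would only invoke this once and reuse it in both cases.
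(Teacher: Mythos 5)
Your proposal is correct, and it rests on the same reduction as the paper: substituting \eqref{eqn_response1} and \eqref{eqn_warning} to get $r(\alpha_u,\omega(\beta))=\min\{cw\alpha_R(\Delta_u)^a\beta,1\}$, so that the ODE becomes piecewise affine with the single breakpoint $1/M_u$ separating ${\cal R}_u$ from ${\cal R}_u^c$ (this is exactly the paper's \eqref{eqn_simplified_ODE}), and your derivation of the ``if and only if'' from the breakpoint identity $\overline{\rho}_u-1/M_u=\alpha_u\eta-\rho_u/M_u$ is the same computation the paper carries out as a chain of algebraic equivalences. Where you differ is in how convergence to the attractor is established: the paper solves the linear ODE explicitly on each piece and splices trajectories, arguing that a trajectory starting in ${\cal R}_u$ increases until some finite $\tau$ at which it crosses into ${\cal R}_u^c$ and thereafter follows $\beta_u(t)=\overline{\rho}_u+e^{-(t-\tau)}(\beta_u(\tau)-\overline{\rho}_u)$, and similarly in the other cases; you instead observe that $g_u$ is continuous with $g_u(0)=\alpha_u\eta>0>g_u(1)$ and has a unique zero (non-decreasing then strictly decreasing when $\rho_u\le 0$; strictly decreasing throughout when $\rho_u>0$), and invoke the standard scalar-ODE fact that positivity below and negativity above the unique zero forces monotone convergence from all of $[0,1]$. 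Your route is slightly more economical — it avoids the explicit exponential solutions and the bookkeeping of the crossing time $\tau$, and it delivers the sign structure of $g_u$ in exactly the form needed for conditions (B.iii)–(B.iv) and the DoA claim of Theorem \ref{thrm_AI} — while the paper's explicit solutions give a more quantitative picture of the transient. Your isolation of the degenerate sub-case $1/M_u>1$ (breakpoint outside $[0,1]$) is a point the paper glosses over but handles implicitly; both arguments are sound there.
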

\begin{proof}
    At first, let $\rho_u \leq 0$. Then, by definition of $\rho_u$, $1-\eta-\eta_a \geq \frac{1}{cw\alpha_R(\Delta_u)^a}$. Since $\eta > 0$, therefore, 
    $$
    \overline{\rho}_u \geq \alpha_u \eta + \frac{1}{cw\alpha_R(\Delta_u)^a} > \frac{1}{cw\alpha_R(\Delta_u)^a} \implies \overline{\rho}_u \in {\cal R}_u^c.
    $$ 
    The ODE \eqref{eqn_ode} can be written in the simplified form as follows (recall $r(\alpha_u, \omega(\beta_u)) = \min\{1, cw\alpha_R(\Delta_u)^a\beta_u\}$):
\begin{align}\label{eqn_simplified_ODE}
\dot{\beta_u} = 
\begin{cases}
 \overline{\rho}_u- \beta_u, \mbox{ if } r(\alpha_u, \omega(\beta_u)) = 1, \mbox{ i.e., } \beta_u \in {\cal R}_u^c, \\
 \alpha_u \eta - \rho_u\beta_u, \mbox{ if } r(\alpha_u, \omega(\beta_u)) < 1, \mbox{ i.e., } \beta_u \in {\cal R}_u.
\end{cases}
\end{align}
Clearly, the RHS of the above ODE is piecewise linear, and hence the solution $\beta_u(\cdot)$ exists.

Now, say $\beta_u(0) \in {\cal R}_u$. Then, $\dot{\beta_u} > 0$, thus, $\beta_u(t)$ increases with $t$. This implies the existence of $\tau <\infty$ such that $cw\alpha_R(\Delta_u)^a\beta_u(\tau) = 1$. Then, the solution of the ODE for all $t\geq \tau$ is:
\begin{align}\label{eqn_beta_sol}
    \beta_u(t) = \overline{\rho}_u + e^{-t+\tau}(\beta_u(\tau) - \overline{\rho}_u).
\end{align}
The above solution holds for all $t\geq \tau$ as $r(\alpha_u, \omega(\beta_u(t))) = 1$ for all $t\geq \tau$; towards this, observe that $\beta_u(\tau) \leq \overline{\rho}_u$ (since $\overline{\rho}_u \in {\cal R}_u^c$), therefore, $t \mapsto \beta_u$ is an increasing function. Hence, from \eqref{eqn_beta_sol}, $\beta_u(t) \to \overline{\rho}_u$ as $t \to \infty$. On the contrary if $\beta_u(0) \in {\cal R}_u^c$, i.e., $r(\alpha_u, \omega(\beta_u(0))) = 1$, then for all $t\geq 0$ (check $r(\alpha_u, \omega(\beta_u(t))) = 1$ for all $t\geq 0$):
\begin{align}\label{eqn_beta_sol_2}
    \beta_u(t) = \overline{\rho}_u + e^{-t}(\beta_u(0) - \overline{\rho}_u).
\end{align}
From above, $\beta_u(t) \to \overline{\rho}_u$.

Now, let $\rho_u > 0$. Then, by definitions, we have:
\begin{align*}
    \frac{\alpha_u\eta}{\rho_u} \in {\cal R}_u^c &\iff cw\alpha_R(\Delta_u)^a  \frac{\alpha_u\eta}{\rho_u} \geq 1\\
    &\iff cw\alpha_R(\Delta_u)^a\alpha_u\eta \geq \rho_u\\
    &\iff cw\alpha_R(\Delta_u)^a\overline{\rho}_u \geq 1 \iff \overline{\rho}_u \in {\cal R}_u^c.
\end{align*}
We will now derive ${\cal A}_u$ for the case when $\overline{\rho}_u \in {\cal R}_u$, and ${\cal A}_u$ can be derived analogously for the complementary case. As before, say $\beta_u(0) \in {\cal R}_u$. Then initially the $\beta_u$-ODE is:
$$
    \dot{\beta_u} = \alpha_u \eta - \rho_u \beta_u.
$$
Thus, the solution of the above ODE is:
\begin{align*}
    \beta_u(t) = \frac{\alpha_u\eta}{\rho_u} + e^{-\rho_u t}\left(\beta_u(0) - \frac{\alpha_u\eta}{\rho_u} \right) \mbox{ for all } t \geq 0.
\end{align*}
Clearly, $\beta_u(t) \to \frac{\alpha_u\eta}{\rho_u}$.
If $\beta_u(0) \in {\cal R}_u^c$, then as previously, the ODE solution is given by \eqref{eqn_beta_sol_2} for all $t < \tau$, where $\tau := \inf\{t : r(\alpha_u, \omega(\beta_u(t))) < 1\}$. For $t \geq \tau$, the solution is:
$$
    \beta_u(t) = \frac{\alpha_u\eta}{\rho_u} + e^{-\rho_u(t-\tau)}\left( \beta_u(\tau) - \frac{\alpha_u\eta}{\rho_u} \right).
$$
Then, $\beta_u(t) \to \frac{\alpha_u\eta}{\rho_u}$.
\end{proof}

\noindent \underline{\textbf{Proof of Theorem \ref{thrm_perf_x_eta}:}} From the definition of $x_\eta$, note that $x_\eta > \eta$. From  \eqref{eqn_betaR}, $\beta_R^x$ is decreasing in $x$, and thus, by (a) in the proof of Theorem \ref{thrm_response1}, $\beta_R^{x_\eta} <  \beta_R^\eta \leq \delta_a$. By \eqref{eqn_betaR}:


\noindent $\bullet$ if $x_\eta \in (\eta^*_{\widetilde{\theta}}, x_F]$, $\beta_F^{x_\eta} \geq \beta_F^{x_F} = \overline{\rho}_F(x_F) = \frac{1}{cw\alpha_R (\Delta_R)^a}$;

\noindent $\bullet$ if $x_\eta \in (x_F, 1-\mua)$, $\beta_F^{x_\eta} \geq \beta_F^{1-\mua} = \alpha_F(1-\mua)$. \eop

\begin{lemma}\label{lemma_theta_tilde}
    For notations as in Algorithm \ref{alg_AI} when $K_\delta \geq 0$, ${\widetilde \theta} \geq \theta$.
\end{lemma}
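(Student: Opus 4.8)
The plan is to simply unwind the definition of $\widetilde\theta$ given in Algorithm \ref{alg_AI} and verify the inequality in each of the two branches separately; no real analysis is involved, it is a bookkeeping check on how $\epsilon$ is constrained.

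First I would dispose of the easy branch: when $\theta > f(\theta,\delta)$, the algorithm sets $\widetilde\theta \gets \theta$, so $\widetilde\theta \ge \theta$ holds with equality and there is nothing to prove (and the hypothesis $K_\delta \ge 0$ is not even used here — it only guarantees that the square root appearing in the other branch is real).

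Then I would treat the remaining branch, where $\theta \le f(\theta,\delta)$ and $K_\delta\ge 0$, so that
\begin{align*}
\widetilde\theta &= \min\Bigl\{\max\{T, M\} + \epsilon,\ 1\Bigr\}, \text{ with}\\
T &:= \frac{-\kappa + \sqrt{K_\delta}}{2(\Delta_R)^a\alpha_R}, \quad M := 1 - \frac{\delta(1-\alpha_F)}{\alpha_R},
\end{align*}
and $\epsilon > \max\{0,\ \theta - T\}$. The point is that this lower bound on $\epsilon$ is tailored precisely for the claim: from $\epsilon > \theta - T$ we obtain $T+\epsilon > \theta$, hence $\max\{T,M\}+\epsilon \ge T+\epsilon > \theta$; and from the hypothesis of Theorem \ref{thrm_response1} we have $\theta \le 1$. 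Consequently both arguments of the outer $\min$ are $\ge \theta$, so $\widetilde\theta \ge \theta$.

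The only points that require a moment's care — and which I would state explicitly — are that the bound $\epsilon > \max\{0, \theta - T\}$ forces $T+\epsilon>\theta$ irrespective of the sign of $\theta - T$, and that the truncation at $1$ in the definition of $\widetilde\theta$ cannot spoil the bound because $\theta\le 1$. There is no genuine obstacle; the lemma is essentially a consistency check that the $\epsilon$ in Algorithm \ref{alg_AI} was chosen large enough.
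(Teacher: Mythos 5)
Your proof is correct, and it takes a genuinely more direct route than the paper's. You treat the lemma as pure bookkeeping on the algorithm's definition: writing $T := \frac{-\kappa+\sqrt{K_\delta}}{2(\Delta_R)^a\alpha_R}$ and $M := 1-\frac{\delta(1-\alpha_F)}{\alpha_R}$, the constraint $\epsilon > \max\{0,\theta-T\}$ forces $T+\epsilon > \max\{T,\theta\} \geq \theta$, hence $\max\{T,M\}+\epsilon > \theta$, and the outer $\min$ with $1$ cannot spoil the bound since $\theta \leq 1$ by the hypothesis of Theorem \ref{thrm_response1}. The paper instead rewrites $g(x):=x-f(x,\delta)$ as a positive multiple of $p(x)/t(x)$ for a convex quadratic $p$ and an increasing affine $t$, recognizes your $T$ and $M$ as the larger root $\theta_2$ of $p$ and the zero $\theta^*$ of $t$, and runs a case analysis on the sign of $g(\theta)$ and on the position of $\theta$ relative to $\theta^*,\theta_1,\theta_2$; in one of those cases it invokes exactly your observation that $\epsilon>\theta-\theta_2$. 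Your argument subsumes all of these cases at once and needs no information about $f$, $\kappa$ or the roots. What the paper's longer route buys is not the lemma itself but auxiliary structure: the quadratic $p$, the facts $p(0)>0$, $p(1)>0$, and the location of its roots are reused verbatim in the proof of Lemma \ref{lemma_feasibility_w} (to conclude $p(\widetilde\theta)>0$), so the case analysis there is doing double duty. As a standalone proof of $\widetilde\theta\geq\theta$, yours is complete.
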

\begin{proof}
    If $\theta > f(\theta, \delta)$, then ${\widetilde \theta} = \theta$, and we are done. Else, define, $g(x) := x - f(x, \delta)$ for $x \in \mathbb{R}$; observe $g(\theta) \leq 0$. Using the definitions in \eqref{eqn_notations}, re-write $g(x)$ as:
    \begin{align*}
        g(x) &= \left( \frac{1-\mua}{1-\alpha_F}\right) \frac{p(x)}{t(x)}, \mbox{ where}\\
        t(x) &:= (\Delta_R)^a\left(\delta_a - \frac{(1-x)(1-\mua)\alpha_R}{1-\alpha_F} \right) \mbox{ and}\\
        p(x) &:= Ax^2+\kappa x+C, \mbox{ for } A := (\Delta_R)^a \alpha_R \mbox{ and } C := \delta \alpha_F.
    \end{align*}
    Observe that $t(\cdot)$ is strictly increasing and $t(\theta^*) = 0$ for $\theta^* := 1 - \frac{\delta(1-\alpha_F)}{\alpha_R}$; note $\theta^* < 1$. Thus, $t(x) < 0$ for $x < \theta^*$ and $t(x) > 0$ for $x > \theta^*$. Also, $p(\cdot)$ is convex function such that $p(0) > 0$ and $p(1) > 0$ (recall $(\Delta_R)^a > 1$). Thus, there exists $\theta_1 = \frac{-\kappa - \sqrt{K_\delta}}{2A}, \theta_2 = \frac{-\kappa + \sqrt{K_\delta}}{2A}$ such that $p(\theta_1) = p(\theta_2) = 0$, provided $K_\delta \geq 0$. By convexity, $p(1) > 0$ and $p(0) > 0$, either both $\theta_1, \theta_2$ are above $1$, or below $0$, or are in $(0,1)$. With the above notations, 
    $$
        \widetilde{\theta} = \min\{ \max\{\theta^*, \theta_2\} + \epsilon , 1\}, \mbox{ with } \epsilon > \max \{0, \theta - \theta_2\}.
    $$

    If ${\widetilde \theta} = 1$, then clearly $\theta \leq {\widetilde \theta} = 1$. If $g(\theta) = 0$, then, $p(\theta) = 0$. Thus, either $\theta = \theta_1$ or $\theta = \theta_2$. Therefore, ${\widetilde \theta} = \max\{\theta_2, \theta^*\} + \epsilon > \theta_2 \geq \theta$. Else if $g(\theta) < 0$, then we will prove the claim for three cases separately. 

    Case 1: If $\theta > \theta^*$. Then $t(\theta) > 0$. Also, $g(\theta) < 0$, therefore, $p(\theta) < 0$. Thus, $\theta_1, \theta_2 \in (0,1)$ and $\theta \in (\theta_1, \theta_2)$.     By definition of ${\widetilde \theta}$, in this case,
    ${\widetilde \theta} = \theta_2 + \epsilon > \theta_2 > \theta$. 

    Case 2: If $\theta < \theta^*$. Then $t(\theta) < 0$, and $g(\theta) < 0$. Thus, $p(\theta) > 0$, which implies  either $\theta < \theta_1$ or $\theta > \theta_2$ (by convexity, $p(x) < 0$ for $x \in (\theta_1, \theta_2)$). Again by definition of ${\widetilde \theta}$, in this case we have:

    (i) ${\widetilde \theta} = \max\{\theta^*, \theta_2\} + \epsilon > \max\{\theta^*, \theta_2\} > \theta$ if $\theta < \theta_1$, or 

    (ii) ${\widetilde \theta} = \max\{\theta^*, \theta_2\} + \epsilon > \theta^* + \theta - \theta_2 > \theta$ if $\theta > \theta_2$.

    Case 3: If $\theta = \theta^*$. Then $t(\theta) = 0$ and $g(\theta) < 0$. Thus, $p(\theta) < 0$, and the claim follows as in case 1. 
\end{proof}

\begin{lemma}\label{lemma_feasibility_w}
    Under the hypothesis of Theorem \ref{thrm_response1} and for notations as in Algorithm \ref{alg_AI} when $K_\delta \geq 0$, the choice of $w$ is feasible.
\end{lemma}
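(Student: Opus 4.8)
The plan is to show that the upper bound on $\epsilon_1$ prescribed in Algorithm~\ref{alg_AI} is strictly positive; once this holds, every $\epsilon_1$ in the resulting open interval is feasible and the corresponding $w$ is well defined. Abbreviating $B:=\tfrac{1}{1-\mua}\max\left\{1,\ \tfrac{1}{(\Delta_R)^a\widetilde{\theta}}\right\}$, the goal is $\min\left\{\tfrac{1}{\delta_a},\ \fone\right\}>B$. I would prove $\tfrac{1}{\delta_a}>B$ and $\fone>B$ separately, and in each case it is enough to dominate both branches of the $\max$ defining $B$.

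First I collect the sign facts. By Lemma~\ref{lemma_theta_tilde}, $\widetilde{\theta}\ge\theta$, and since $\delta>\alpha_R$ and $\theta>\alpha_F$ we get $\delta(1-\alpha_F)>\alpha_R(1-\alpha_F)>\alpha_R(1-\theta)$, i.e.\ $\theta>\theta^*:=1-\tfrac{\delta(1-\alpha_F)}{\alpha_R}$; hence $\widetilde{\theta}>\max\{\alpha_F,\theta^*\}$. With $\eta^*_{\widetilde{\theta}}=\tfrac{(1-\widetilde{\theta})(1-\mua)}{1-\alpha_F}$ and $\delta_a=\delta(1-\mua)$ this yields $1-\mua-\eta^*_{\widetilde{\theta}}>0$ and $\delta_a-\alpha_R\eta^*_{\widetilde{\theta}}>0$, so $\fone>0$. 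Now $\tfrac{1}{\delta_a}=\tfrac{1}{\delta(1-\mua)}$ exceeds $\tfrac{1}{1-\mua}$ since $\delta<\theta\le1$, and exceeds $\tfrac{1}{(1-\mua)(\Delta_R)^a\widetilde{\theta}}$ since $(\Delta_R)^a\widetilde{\theta}\ge(\Delta_R)^a\theta>\delta$ by the standing hypothesis $\theta>\delta/(\Delta_R)^a$. For $\fone$, using the identity $\delta_a(1-\mua-\eta^*_{\widetilde{\theta}})=(1-\mua)(\delta_a-\eta^*_{\widetilde{\theta}}\delta)$ (a restatement of $\delta_a=\delta(1-\mua)$) and clearing the positive denominators, $\fone>\tfrac{1}{1-\mua}$ reduces to $\delta>\alpha_R$, which holds (for $\widetilde{\theta}<1$; the boundary $\widetilde{\theta}=1$ is discussed below).

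The crux is the remaining inequality $\fone>\tfrac{1}{(1-\mua)(\Delta_R)^a\widetilde{\theta}}$. Clearing denominators, applying the same identity, and cancelling the common factor $1-\mua>0$, this collapses exactly to $(\Delta_R)^a\widetilde{\theta}(\delta_a-\eta^*_{\widetilde{\theta}}\alpha_R)>\delta_a-\eta^*_{\widetilde{\theta}}\delta$, which is precisely $\widetilde{\theta}>f(\widetilde{\theta},\delta)$. To verify this last inequality: if $\theta>f(\theta,\delta)$ then $\widetilde{\theta}=\theta$ and it is immediate; otherwise I use the factorisation $g(x):=x-f(x,\delta)=\tfrac{1-\mua}{1-\alpha_F}\cdot\tfrac{p(x)}{t(x)}$ from the proof of Lemma~\ref{lemma_theta_tilde}, where $p$ is convex with real roots $\theta_1\le\theta_2=\tfrac{-\kappa+\sqrt{K_\delta}}{2(\Delta_R)^a\alpha_R}$ (real because $K_\delta\ge0$) and $t$ is strictly increasing with $t(\theta^*)=0$. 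The construction $\widetilde{\theta}=\min\{\max\{\theta_2,\theta^*\}+\epsilon,\,1\}$ with $\epsilon>0$ places $\widetilde{\theta}$ strictly above $\theta^*$, so $t(\widetilde{\theta})>0$, and either strictly above $\theta_2$ or at $1$ (which lies below both roots when $\theta_2>1$), so $p(\widetilde{\theta})>0$; hence $g(\widetilde{\theta})>0$. Combining the three parts gives $\min\{\tfrac{1}{\delta_a},\fone\}>B$, so a feasible $\epsilon_1$, and thus $w$, exists.

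I expect the main obstacle to be the bookkeeping in this last part: correctly collapsing the $\fone$-inequality onto $\widetilde{\theta}>f(\widetilde{\theta},\delta)$ via the $(1-\mua)$ cancellation, and then reading off $p(\widetilde{\theta})>0$ and $t(\widetilde{\theta})>0$ from the construction of $\widetilde{\theta}$ in Algorithm~\ref{alg_AI} together with the $p,t$ analysis already performed in Lemma~\ref{lemma_theta_tilde}. A secondary delicate point is the degenerate boundary $\widetilde{\theta}=1$, where $\eta^*_{\widetilde{\theta}}=0$ forces $\fone=\tfrac{1}{1-\mua}=B$ and the $\epsilon_1$-interval collapses; this case should be excluded at the outset, since then the requirement $0<\eta\le\eta^*_{\widetilde{\theta}}$ on the proportion of type~$1$ users is infeasible and the design is vacuous. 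Everything else is routine sign-tracking.
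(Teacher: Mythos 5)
Your proof is correct and follows essentially the same route as the paper's: you establish the same four inequalities showing $\min\left\{\frac{1}{\delta_a}, \fone\right\} > \frac{1}{1-\mua}\max\left\{1, \frac{1}{(\Delta_R)^a\widetilde{\theta}}\right\}$, and you reduce the hardest one to $p(\widetilde{\theta})>0$ via the factorisation $g = \frac{1-\mua}{1-\alpha_F}\cdot\frac{p}{t}$ from Lemma \ref{lemma_theta_tilde}, exactly as the paper does (the paper merely organises the same inequalities through a case split on which term attains the minimum, and phrases the key step as $p(\widetilde{\theta})>0$ rather than $\widetilde{\theta}>f(\widetilde{\theta},\delta)$). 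Your remark that the argument degenerates at $\widetilde{\theta}=1$, where $\eta^*_{\widetilde{\theta}}=0$ forces $\fone$ to coincide with the lower endpoint, flags a boundary case the paper's proof silently passes over (its step $0<\eta^*_{\widetilde{\theta}}(\delta-\alpha_R)$ likewise needs $\widetilde{\theta}<1$), so excluding that case explicitly is a small but genuine tightening rather than a defect of your argument.
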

\begin{proof}
    We are given $w$ such that:
\begin{align*}
cw\alpha_R &\in \bigg(\frac{1}{1-\mua} \max\left\{1, \frac{1}{(\Delta_R)^a \widetilde{\theta}}\right\},\\
&\hspace{2cm}\min\left\{  \frac{1}{\delta_a}, \fone \right\}  \bigg).
\end{align*}
We will show that the above interval is not empty. 

\noindent (i) If $\frac{1}{\delta_a} \leq \fone$, then:

$\bullet$ $\frac{1}{1-\mua} < \frac{1}{\delta_a} = \frac{1}{\delta(1-\mua)}$ since $\delta < 1$.

$\bullet$ under hypothesis of Theorem \ref{thrm_response1} and by Lemma \ref{lemma_theta_tilde}, $\widetilde{\theta} (\Delta_R)^a \geq \theta (\Delta_R)^a > \delta$. Thus, $\frac{1}{\widetilde{\theta} (\Delta_R)^a (1-\mua)} < \frac{1}{\delta_a}$.

\noindent (ii) If $\frac{1}{\delta_a} > \fone$, then:

$\bullet$ recall that $\alpha_R < \delta$ and $\eta^*_{\widetilde{\theta}} < 1-\mua$, therefore, 
\begin{align*}
    0 = \delta(1-\mua) - \delta_a &< \eta^*_{\widetilde{\theta}} (\delta-\alpha_R)\\
    \implies \delta(1-\mua - \eta^*_{\widetilde{\theta}} ) &< \delta_a - \eta^*_{\widetilde{\theta}} \alpha_R\\
    \implies \frac{1}{1-\mua} &< \fone.
\end{align*}

$\bullet$ lastly, $\frac{1}{(\Delta_R)^a \widetilde{\theta} (1-\mua)} < \fone$ if:
\begin{align*}
   \frac{1}{(\Delta_R)^a \widetilde{\theta}} &< \frac{\delta_a - \eta^*_{\widetilde{\theta}} \alpha_R}{(1-\mua-\eta^*_{\widetilde{\theta}} )\delta}, \mbox{ i.e., if}\\
   \widetilde{\theta} (\Delta_R)^a (\delta_a - \eta^*_{\widetilde{\theta}}  \alpha_R) &> \delta_a - \eta^*_{\widetilde{\theta}}  \delta, \mbox{ i.e., if}\\
    \eta^*_{\widetilde{\theta}} (\delta - \alpha_R   (\Delta_R)^a \widetilde{\theta} ) &> \delta_a (1-  (\Delta_R)^a\widetilde{\theta}), \mbox{ i.e., if}\\
    \left(\frac{1-\widetilde{\theta}}{1-\alpha_F}\right) (\delta - \alpha_R   (\Delta_R)^a \widetilde{\theta} )  &> \delta (1-  (\Delta_R)^a\widetilde{\theta}), \mbox{ i.e., if}\\
     p(\widetilde{\theta}) &>0,
 \end{align*}
 for $p(\cdot)$ defined in the proof of Lemma \ref{lemma_theta_tilde}. 
Recall from the proof of Lemma \ref{lemma_theta_tilde} that the two zeroes, $\theta_1, \theta_2$, of convex function $p(\cdot)$ are either above $1$, or below $0$, or are in $(0,1)$. Further, $p(0) > 0$ and $p(1) > 0$. In the first two cases, $p(x) > 0$ for all $x \in [0,1]$; thus $p(\widetilde{\theta}) > 0$. In the last case, by definition of $\widetilde{\theta}$, $\widetilde{\theta} > \theta_2$, thus $p(\widetilde{\theta}) > 0$.
\end{proof}

\end{document}